\documentclass[11pt,reqno]{amsart}
\usepackage[margin=1in]{geometry}
\usepackage{amscd,amsfonts,amsmath,amssymb,amsthm,latexsym,mathrsfs,textcomp,verbatim}
\usepackage{accents}
\usepackage{bm}
\usepackage{bbm}
\usepackage{booktabs}
\usepackage{cite}
\usepackage{color}
\usepackage{constants}
\usepackage{csquotes}
\usepackage{enumerate}
\usepackage{etoolbox}
\usepackage[T1]{fontenc}
\usepackage{float}
\usepackage{hypbmsec}
\usepackage{mathtools}
\usepackage[dvipsnames]{xcolor}
\usepackage[breaklinks=true,colorlinks=true,linkcolor=black!25!RoyalBlue,citecolor=black!25!RoyalBlue,filecolor=RoyalBlue,urlcolor=teal]{hyperref}
\hypersetup{linktocpage}
\usepackage{soul}
\usepackage{tikz}
\usepackage{tikz-cd}
\usetikzlibrary{shapes.geometric}
\usetikzlibrary{shapes.misc}
\usetikzlibrary{positioning}
\allowdisplaybreaks[4]
\usepackage{tikz-cd}
\usepackage{marginnote}
\usepackage[all]{xy} 
\usepackage{dsfont}
\usepackage{tikz-cd}
\usepackage{stmaryrd} 
\usepackage[mathscr]{eucal}
\usepackage{hypbmsec}
\oddsidemargin -0.1cm
\evensidemargin -0.1cm
\topmargin -0.3cm
\textwidth 16cm
\textheight 22cm

\theoremstyle{plain}
\newtheorem{thm}{Theorem}[section]
\newtheorem*{thm*}{Theorem}
\newtheorem{lem}[thm]{Lemma}
\newtheorem{cor}[thm]{Corollary}
\newtheorem*{cor*}{Corollary}
\newtheorem{prop}[thm]{Proposition}

\theoremstyle{remark}
\newtheorem{rem}[thm]{Remark}
\newtheorem*{rem*}{Remark}

\theoremstyle{definition}

\newtheorem{defin}[thm]{Definition}



\newcommand{\ZZ}{\mathbb{Z}}


\newcommand{\Frob}{\mathrm{Frob}}

\newcommand{\GL}{{\operatorname{GL}}}

\newcommand{\Tor}{{\operatorname{Tor}}}

\newcommand{\Gal}{{\operatorname{Gal}}}

\DeclareMathOperator{\Res}{Res}
\DeclareMathOperator{\Sym}{Sym}
\DeclareMathOperator{\ord}{ord}

\DeclareMathOperator{\Iw}{Iw}
\DeclareMathOperator{\univ}{univ}

\DeclareMathOperator{\im}{im}

\usepackage{comment}

\title[]{Control Theorems for Hilbert Modular Varieties}

\author{Arshay Sheth}

\email{arshay.sheth@warwick.ac.uk}

\address{Mathematics Institute, Zeeman Building, University of Warwick, Coventry, CV4 7AL, UK}
\thanks{The author is supported by funding from the European Research Council under the European Union’s Horizon 2020 research and innovation programme (Grant agreement No. 101001051 — Shimura varieties and the Birch--Swinnerton-Dyer conjecture).}
\date{}

\hypersetup{
pdfauthor = {A. Sheth},
pdftitle = {Control Theorems for Hilbert Modular Varieties}
}

\begin{document}

\begin{abstract}
We prove an exact control theorem, in the sense of Hida theory, for the ordinary part of the middle degree  \'etale cohomology of certain Hilbert modular varieties, after localizing at a suitable maximal ideal of the Hecke algebra. Our method of proof builds upon the techniques introduced by Loeffler--Rockwood--Zerbes \cite{lrz}; another important ingredient in our proof is the recent work of Caraiani--Tamiozzo \cite{CT21} on the vanishing of the \'etale cohomology of Hilbert modular varieties with torsion coefficients  outside the middle degree. This work will be used in forthcoming work of the author to show that the Asai--Flach Euler system corresponding to a quadratic Hilbert modular form varies in Hida families.
\end{abstract}
\maketitle

\section[Introduction]{Introduction} 

Let $p$ be an odd prime and let $N$ be a positive integer coprime to $p$.  A fundamental theme in Hida theory is to consider the tower of modular curves 
$$
 \cdots \rightarrow Y_1(Np^r) \rightarrow \cdots  \rightarrow Y_1(Np) 
$$
corresponding to the chain of congruence subgroups
$$
\cdots \subseteq \Gamma_1(Np^r) \subseteq \cdots \subseteq \Gamma_1(Np). 
$$
The \'etale cohomology groups of this tower are packaged into the following inverse limit
$$
 H^1(Np^{\infty}) :=\varprojlim_{r} H^1_{\textrm{\'et} } (Y_1(Np^r)_{\overline{\mathbb Q}}, \mathbb Z_p(1) ),
$$
where the transition maps are taken to be the corresponding pushfoward maps in \'etale cohomology. The module $H^1(N p^{\infty})$ is equipped with an action of the adjoint Hecke operators $T'_{\ell}$ for $\ell \nmid N$ as well as the adjoint Atkin operator $U_p'$ (the usual Hecke operators $T_\ell$ and $U_p$ do not commute with the pushforward maps and hence do not act on the inverse limit).  Analogous to the usual Hida projector, we may define the adjoint or ``anti-ordinary" Hida projector by $e'_{\ord}= \lim \limits_{n \to \infty} (U_p')^{n!}$. Building on the theory of Hida on $p$-adic families of modular forms, Ohta \cite{ohta99} proved a control theorem for the anti-ordinary part of $H^1(N p^{\infty})$.  To state the theorem, we note that $H^1(N p^{\infty})$ is a module over the Iwasawa algebra $\Lambda:=\mathbb Z_p[[1+p \mathbb Z_p]] \cong \mathbb Z_p[[X]]$ via the diamond operators. Let $\Sigma$ denote the set of primes dividing $Np$ and let $G_{\mathbb Q, \Sigma}$ denote the Galois group of the maximal extension of $\mathbb Q$ unramified outside the primes in $\Sigma$. 

\begin{thm}[Ohta] \label{Ohta}
The following hold.  
\vspace{1mm}
\\
(a) We have that $e'_{\ord} H^1(N p^{\infty})$ is finite and free as a $\Lambda$-module.
\vspace{1mm}
\\
(b) For $r \geq 1$ and $k \geq 0$, let $\mathfrak p_{r, k}$ denote the ideal of $\Lambda$ generated by $(1+X)^{p^{r-1}}-(1+p)^{k p^{r-1}}$.  Then there is a canonical isomorphism 
$$
e'_{\ord} H^1(Np^{\infty} )/\mathfrak p_{r, k} \cong e'_{\ord}H^1_{\textrm{\'et} }(Y_1(Np^r)_{\overline{\mathbb Q}}, \Sym ^k (\mathbb Z_p)(1))
$$
of $\mathbb Z_p$-modules that is compatible with the action of $G_{\mathbb Q, \Sigma}$ and the Hecke operators. 
\end{thm}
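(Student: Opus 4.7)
The plan is to identify $H^1(Np^\infty)$ with the étale cohomology of the single modular curve $Y_1(Np)$ with coefficients in an Iwasawa-theoretic sheaf, and then deduce both parts of the theorem from a ``moment map'' short exact sequence combined with the vanishing of cohomology in degrees other than one after applying $e'_{\ord}$.

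First, I would package the tower into a pro-étale $\Lambda$-sheaf on $Y_1(Np)$. The projection $\pi_r : Y_1(Np^r) \to Y_1(Np)$ is a finite étale Galois cover with group $(1+p\mathbb{Z}_p)/(1+p^r\mathbb{Z}_p)$, and $\Lambda^\sharp := \varprojlim_r (\pi_r)_* \mathbb{Z}_p$ defines a $\Lambda$-sheaf. Since each $\pi_r$ is finite, there is a canonical $G_{\mathbb{Q}, \Sigma}$- and adjoint-Hecke-equivariant isomorphism
\[
H^1(Np^\infty) \;\cong\; H^1_{\textrm{\'et}}(Y_1(Np)_{\overline{\mathbb{Q}}}, \Lambda^\sharp(1)).
\]
For each pair $(r,k)$, a \emph{moment map}, constructed from the $p$-adic Tate module of the universal elliptic curve over $Y_1(Np)$, fits into a short exact sequence of pro-étale sheaves
\[
0 \longrightarrow \Lambda^\sharp \xrightarrow{f_{r,k}} \Lambda^\sharp \longrightarrow (\pi_r)_* \Sym^k(\mathbb{Z}_p) \longrightarrow 0,
\]
where $f_{r,k} = (1+X)^{p^{r-1}} - (1+p)^{kp^{r-1}}$ generates $\mathfrak{p}_{r,k}$. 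Applying cohomology together with the projection formula produces a long exact sequence linking $H^\bullet_{\textrm{\'et}}(Y_1(Np)_{\overline{\mathbb{Q}}}, \Lambda^\sharp(1))$ to $H^\bullet_{\textrm{\'et}}(Y_1(Np^r)_{\overline{\mathbb{Q}}}, \Sym^k(\mathbb{Z}_p)(1))$.

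Next, I would establish the vanishings of $e'_{\ord}$-cohomology in degrees zero and two. Because $Y_1(Np)$ is an affine curve, its étale cohomological dimension for torsion sheaves is at most one, so $H^2$ vanishes identically; the degree-zero vanishing follows because $U_p'$ acts by a non-unit on $H^0$ and is therefore annihilated by $e'_{\ord}$. With these vanishings, the long exact sequence collapses into the short exact sequence
\[
0 \to e'_{\ord} H^1(Np^\infty) \xrightarrow{f_{r,k}} e'_{\ord} H^1(Np^\infty) \to e'_{\ord} H^1_{\textrm{\'et}}(Y_1(Np^r)_{\overline{\mathbb{Q}}}, \Sym^k(\mathbb{Z}_p)(1)) \to 0,
\]
which is exactly the isomorphism asserted in part (b).

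The main obstacle will be the careful construction of the moment map as a morphism of pro-étale sheaves with the prescribed kernel, compatible with the Hecke and Galois structures; this involves realizing $\Lambda^\sharp$ as a sheaf of Iwasawa measures built from the $p$-adic Tate module of the universal elliptic curve and computing the $k$-th moment explicitly, and is where one must be most careful to track the tower/diamond-operator action that pins down the correspondence between $X$ and the Iwasawa variable. Once part (b) is in hand, part (a) follows formally: specializing at $(r,k)=(1,0)$ (where $f_{1,0}=X$) identifies $e'_{\ord} H^1(Np^\infty)/X$ with $e'_{\ord} H^1_{\textrm{\'et}}(Y_1(Np)_{\overline{\mathbb{Q}}}, \mathbb{Z}_p(1))$, a finitely generated torsion-free $\mathbb{Z}_p$-module; topological Nakayama then yields finite generation of $e'_{\ord} H^1(Np^\infty)$ as a $\Lambda$-module, and the standard criterion that a finitely generated $\Lambda$-module whose reduction modulo $X$ is $\mathbb{Z}_p$-free is itself $\Lambda$-free completes the argument. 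Hecke- and Galois-equivariance propagate automatically from the sheaf-level construction.
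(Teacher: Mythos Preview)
The paper does not prove this theorem. Theorem~\ref{Ohta} is stated in the introduction as a known result of Ohta, cited from \cite{ohta99}, and serves purely as motivation for the paper's main result (Theorem~\ref{mainthm}), which is the analogue for Hilbert modular varieties. There is therefore no proof in the paper against which to compare your proposal.

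That said, your outline is a valid strategy and is essentially the moment-map approach developed in \cite{klz2} (refining Ohta's original argument). It is instructive to note how it differs from the method the paper uses for its own main theorem in the Hilbert setting. Rather than constructing an explicit short exact sequence of sheaves via a moment map, the paper interprets the Iwasawa cohomology as cohomology of the Iwahori-level variety with coefficients in a universal distribution module $D_{\univ}$, and then derives a Tor spectral sequence (Corollaries~\ref{spectral1} and~\ref{spectral2}) relating it to finite-level cohomology with coefficients in $V_\lambda$. Degeneration of this spectral sequence is forced by vanishing input: in the Hilbert case this is the Caraiani--Tamiozzo theorem (hence the need to localise at $\mathfrak m$), whereas in the $\GL_2/\mathbb Q$ case the affineness argument you sketch suffices without localisation. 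The spectral-sequence route scales better to higher-dimensional varieties, where the relevant ideals of $\Lambda$ are no longer principal and one cannot write down a single moment-map exact sequence; conversely, your approach is cleaner in dimension one precisely because $\mathfrak p_{r,k}$ is principal. Your deduction of part~(a) from part~(b) via topological Nakayama and the freeness criterion mirrors exactly the argument the paper gives for part~(a) of Theorem~\ref{mainthm}.
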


A remarkable aspect of the above theorem is that the module $H^1(Np^{\infty})$, which is built only from \'etale cohomology groups with constant coefficients, also embodies information about \'etale cohomology with non-constant coefficients.  Indeed, Theorem \ref{Ohta} can be thought of as a cohomological version of the landmark work of Hida \cite{Hid86},  where he constructed a space of Lambda-adic modular forms which $p$-adically interpolate, as the weights vary, the ordinary parts of spaces of classical modular forms. Ohta's control theorem has since been used in a wide variety of different contexts. For instance, building on the ideas introduced in \cite{ohta99}, Ohta gave a new and streamlined proof of the theorem of Mazur-Wiles \cite{MW84} (the Iwasawa main conjecture over $\mathbb Q$) in a subsequent paper \cite{ohta00}. Ohta's control theorem has also been used as a crucial input by Lei-Loeffler-Zerbes \cite{llz14}
and Kings-Loeffler-Zerbes \cite{klz2} to show that the Beilinson--Flach Euler system associated to the tensor product of two modular forms varies in Hida families. 
\vspace{1mm}
\\
The main goal of this paper is to prove an analogous control theorem for the (anti)-ordinary part of certain Hilbert modular varieties after localizing at a suitable maximal ideal of the Hecke algebra. 

\subsection{Main results.} Let $F$ be a totally real number field of degree $g=[F:\mathbb Q]$ with ring of integers $\mathcal O_F$ and discriminant $\Delta_F$.  We fix an odd prime $p$ which is unramified in $F$.  Let $G=\textrm{Res}_{F/ \mathbb Q} \hspace{1mm}  \GL_2$ and let $E/\mathbb Q_p$ be a finite extension with ring of integers $\mathcal O$ such that the maximal torus of diagonal matrices splits over $E$.  
For each $n \geq 1$, we let 
$$
U_{n, p}:=\left\{ \begin{pmatrix} 
 a &  b \\
 c & d
\end{pmatrix} \in \GL_2(\mathcal O_F \otimes \mathbb Z_p):  \begin{pmatrix} 
 a &  b \\
 c & d
\end{pmatrix} \equiv \begin{pmatrix} 
x &  * \\
 0 & x
\end{pmatrix} \textrm{ mod } p^n  \textrm{ for some } x \in \mathcal O_F \otimes \mathbb Z_p \right \}. 
$$
 When $n=0$, we let $U_{n, p}$ denote the Iwahori subgroup \textit{i.e.} the subgroup of $\GL_2(\mathcal O_F \otimes \mathbb Z_p)$ consisting of those matrices which are upper triangular mod $p$. 
Let $\mathfrak N$ be an ideal of $\mathcal O_F$ which does not divide 2, 3, or $\Delta_F$. We fix a prime-to-$p$ open compact subgroup $U^{(p)} =\{ g \in G(\mathbb A_f^{ (p)}): g \equiv  \begin{pmatrix} 
* &  * \\
 0 & 1
\end{pmatrix}  \textrm{ mod } \mathfrak N \}$ and for all $n \geq 0$, we set $U_n=U^{(p)} U_{n, p}$ to be an open compact subgroup of $G(\mathbb A_f)$.  We denote by $Y_G(U_n)$ the corresponding Hilbert modular variety. The reason for working with this particular level group is explained in Remark \ref{SV5}; briefly, since our reductive group $G$ does not satisfy the SV5 axiom for Shimura varieties, we need to work with level groups having fixed and sufficiently large intersection with the center $Z_G$.

As above, we wish to study the \'etale cohomology of the Hilbert modular varieties in the tower 
$$
 \cdots \rightarrow Y_G(U_r) \rightarrow \cdots  \rightarrow Y_G(U_1)
$$
which we similarly package in the Iwasawa cohomology module 
$$H^g_{\textrm{Iw}}(Y_G(U_\infty)_{\overline{\mathbb Q}}, \mathcal O):=\varprojlim_{n} H^g_{\textrm{\'et}} (Y_G(U_n)_{\overline{\mathbb Q}}, \mathcal O),$$ where the transition maps are taken to be the corresponding pushforward maps in \'etale cohomology. 
\vspace{2mm}
\\
Let $\Sigma$ be the set of places of $F$ containing all primes dividing $\mathfrak N$ and all primes above $p$. 
Let $\mathbb T$ denote the spherical Hecke algebra generated by the standard Hecke operators $\mathcal T_v$ and $\mathcal S_v^{\pm 1}$ for $v \not \in \Sigma$ (henceforth, we use calligraphic font for our Hecke operators to avoid confusion with various level groups appearing in the paper). We let $e'_\textrm{ord}$ denote Hida's anti-ordinary projection operator;  as above, $e'_{\ord}= \lim \limits_{n \to \infty} (\mathcal U_p')^{n!}$, where $\mathcal U_p'$ is the Hecke operator corresponding to the double coset $U_{n, p} \begin{pmatrix}
    1 & 0 \\ 0 & p
    \end{pmatrix} U_{n, p}$ (see Section \ref{3.1}) . 
Both $\mathbb T$ and $e'_{\textrm{ord}}$ act on the \'etale cohomology of $Y_G(U_n)$. Let $\mathfrak m$ be a maximal ideal in the support of $H^g_{\textrm{\'et}}(Y_G(U_n)_{\overline {\mathbb Q}} , \mathbb F_p)$ such that the image of the associated Galois representation $\overline{\rho_\mathfrak m}$ is not solvable (see Section \ref{Section4}).
\vspace{2mm}
\\
Let $\Lambda=\mathcal O \llbracket (\mathcal O_F \otimes \mathbb Z_p)^{\times} \rrbracket$ \textit{i.e.} $\Lambda$ is the Iwasawa algebra over $\mathcal O$ corresponding to $(\mathcal O_F \otimes \mathbb Z_p)^{\times}$ and for each $n \geq$ 1, let $\Lambda_n$ denote the Iwasawa algebra over $\mathcal O$ corresponding to elements of $\mathcal (\mathcal O_F \otimes \mathbb Z_p)^{\times}$ which are congruent to 1 mod $p^n$. Let $\lambda$ denote a weight of $G$ that is self-dual in the sense of Section \ref{dual}; in other words $\lambda$ is a character of the maximal torus of diagonal matrices of $G$ that is trivial on the subgroup of scalar matrices. Let $V_\lambda$ denote the irreducible representation of $G$ of highest weight $\lambda$. Let $\mathcal O[-\lambda]$ denote $\mathcal O$ with $\Lambda$ acting via the inverse of the character $\lambda$.

\vspace{2mm}

\begin{thm*}[Theorem \ref{mainthm}] \label{main1} With the notation as above, the following hold: 
\vspace{2mm}
\\
(a) For all $n \geq 1$, we have that $e'_{\ord} H^g_{\Iw} (Y_G(U_\infty)_{\overline {\mathbb Q}}, \mathcal O)_{\mathfrak m}$ is free as a $\Lambda_n$-module.
\vspace{2mm}
\\
(b) For all $n \geq 1$, we have an isomorphism of $\mathcal O$-modules
$$
e'_{\ord} H^g_{\Iw}(Y_G(U_\infty)_{\overline {\mathbb Q}}, \mathcal O)_{\mathfrak m} \otimes_{\Lambda_n} \mathcal O[-\lambda] \cong e'_{\ord}H^g_{\textrm{\'et}}(Y_G(U_n)_{\overline{\mathbb Q}}, V_\lambda)_{\mathfrak m}
$$
that is compatible with the action of $G_{\mathbb Q, \Sigma}$ and $\mathbb T$. 
\vspace{2mm}
\\
(c) When $n=0$, we have a similar isomorphism
$$
e'_{\ord} H^g_{\Iw}(Y_G(U_\infty)_{\overline {\mathbb Q}}, \mathcal O)_{\mathfrak m} \otimes_{\Lambda} \mathcal O[-\lambda] \cong e'_{\ord}H^g_{\textrm{\'et}}(Y_G(U_0)_{\overline{\mathbb Q}}, V_\lambda)_{\mathfrak m}. 
$$
\end{thm*}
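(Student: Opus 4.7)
The plan is to adapt the strategy of Loeffler--Rockwood--Zerbes \cite{lrz} to the Hilbert modular setting, with the vanishing theorem of Caraiani--Tamiozzo \cite{CT21} playing the essential role. The overarching idea is to first compare the $V_\lambda$-cohomology on $Y_G(U_n)$ with constant-coefficient cohomology on a deeper level $Y_G(U_m)$; second, to pass to the anti-ordinary part and to the inverse limit, using the concentration of cohomology in middle degree $g$ to ensure the exactness required; and third, to extract from this comparison both the control isomorphism and the freeness statement at once.

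First I would realize, for each $n \geq 1$, the \'etale sheaf $\mathcal V_\lambda$ on $Y_G(U_n)$ as a piece of the pushforward $\pi_{m,n,*} \mathcal O$, where $\pi_{m,n}: Y_G(U_m) \to Y_G(U_n)$ for $m$ sufficiently larger than $n$. Since $\lambda$ is self-dual (hence trivial on the center) and is encoded by the diamond operator action of the finite quotient $U_{n,p}/U_{m,p}$, the Borel--Weil realization of $V_\lambda$ inside a space of polynomial functions yields a natural \emph{moment map} from $\pi_{m,n,*}\mathcal O$ to $\mathcal V_\lambda$, and correspondingly a map on cohomology. Taking the limit over $m$, the source assembles into a $\lambda$-specialization of the Iwasawa cohomology module; after applying the anti-ordinary projector $e'_{\ord}$ and localizing at $\mathfrak m$, the goal becomes to show that this specialization map is an isomorphism.

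Next I would invoke \cite{CT21}: under the non-solvability hypothesis on $\overline{\rho_{\mathfrak m}}$, the mod-$p$ \'etale cohomology $H^i_{\textrm{\'et}}(Y_G(U_n)_{\overline{\mathbb Q}}, \mathbb F_p)_{\mathfrak m}$ vanishes for $i \neq g$. By a universal-coefficient argument together with induction on the length of the coefficients, this upgrades to $\mathcal O$-freeness of $H^g_{\textrm{\'et}}(Y_G(U_n)_{\overline{\mathbb Q}}, \mathcal O)_{\mathfrak m}$ and to the vanishing, in degrees $\neq g$, of cohomology with any $\mathcal O$-flat coefficient sheaf after localization. The relevant pushforward maps are therefore surjective in middle degree, $R^1\varprojlim$ vanishes along the tower, and the Iwasawa cohomology $H^g_{\Iw}(Y_G(U_\infty)_{\overline{\mathbb Q}}, \mathcal O)_{\mathfrak m}$ enjoys the expected exactness properties as a $\Lambda$-module, all preserved by $e'_{\ord}$.

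Putting things together, the control isomorphism in parts (b) and (c) follows by showing that the moment map from the first step becomes an isomorphism after anti-ordinary projection and localization, using Nakayama's lemma with the mod-$p$ vanishing as the base case, together with a rank count to exclude any residual kernel or cokernel. Freeness over $\Lambda_n$ in part (a) then follows because the specializations at all self-dual weights $\lambda$ produce $\mathcal O$-free modules of common rank; by a standard criterion for freeness of a finitely generated module over the regular local ring $\Lambda_n$, this suffices. The Iwahori case (part (c)) is handled analogously, using that $U_0/U_1$ has order prime to $p$, so that no subtlety arises in extracting the $\lambda$-isotypic summand at the bottom of the tower. The main technical obstacle I anticipate is verifying the simultaneous compatibility of the moment map with the $G_{\mathbb Q, \Sigma}$-action, the Hecke algebra $\mathbb T$, and the anti-ordinary projector; this compatibility is conceptually clear but demands careful bookkeeping of the various group-theoretic actions on the tower $\{Y_G(U_n)\}_n$.
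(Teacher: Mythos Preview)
Your overall strategy is in a different style from the paper's, and it contains a genuine gap in the second paragraph. You claim that the Caraiani--Tamiozzo vanishing ``upgrades to \ldots\ the vanishing, in degrees $\neq g$, of cohomology with any $\mathcal O$-flat coefficient sheaf after localization'' via ``a universal-coefficient argument together with induction on the length of the coefficients.'' This does not work: the \'etale local system $\mathcal V_\lambda$ is not an iterated extension of constant sheaves, so no d\'evissage in the coefficient module reduces you to the constant case. In fact, the vanishing of $H^i_{\textrm{\'et}}(Y_G(U_n)_{\overline{\mathbb Q}}, V_\lambda)_{\mathfrak m}$ for $i \neq g$ is exactly Corollary~\ref{lastcor} in the paper, deduced \emph{a posteriori} from the main theorem, not as an input to it. Without this vanishing your Nakayama-plus-rank-count argument for the moment map has no base case on the target side.

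The paper avoids this circularity entirely by working with the distribution module $D_{\univ}$ and the flat complex $e'_{\ord} C^\bullet(U_0, D_{\univ})$ over $\Lambda$, which yields a Tor spectral sequence
\[
\Tor_{-i}^{\Lambda_n}\bigl(e'_{\ord} H^j_{\Iw}(Y_G(U_\infty)_{\overline{\mathbb Q}},\mathcal O),\,\mathcal O[-\lambda]\bigr) \Longrightarrow e'_{\ord} H^{i+j}_{\textrm{\'et}}(Y_G(U_n)_{\overline{\mathbb Q}}, V_\lambda).
\]
Here Caraiani--Tamiozzo is invoked only for the \emph{constant} coefficient sheaf $\mathcal O$ (Corollary~\ref{modpvanishing}), which kills every row except $j=g$; a Koszul resolution bounds the nonzero columns to $-g \le i \le 0$. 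The spectral sequence then degenerates at $E_2$, and (b), (c) drop out by reading off the $i=0$ term. The missing ingredient in your sketch---that the anti-ordinary $V_\lambda$-cohomology is computed by constant-coefficient cohomology at deeper level---is precisely what the paper packages in Theorem~\ref{3.5}, whose proof requires the delicate comparison of component sets and Shapiro's lemma in the absence of axiom SV5; your proposal suppresses this.

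Your freeness argument for (a) is also different and less direct. You propose to vary $\lambda$ over all self-dual weights, check that the specializations are $\mathcal O$-free of common rank, and invoke a freeness criterion over the regular local ring $\Lambda_n$; making this precise would require controlling the rank uniformly and identifying enough specialization points. The paper instead reads off $\Tor_1^{\Lambda_n}(e'_{\ord} H^g_{\Iw},\,\mathcal O) = 0$ from the degenerate spectral sequence (it is $e'_{\ord} H^{g-1}$, which vanishes after localization), bootstraps to $\Tor_1^{\Lambda_n}(e'_{\ord} H^g_{\Iw},\,k) = 0$ via the long exact sequence for $0 \to \mathcal O \xrightarrow{\pi} \mathcal O \to k \to 0$, and concludes by the local criterion for flatness. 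This uses only a single specialization (trivial $\lambda$) and is sharper.
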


This theorem can be regarded as a generalisation to \'etale cohomology of Hilbert varieties of Ohta's work on \'etale cohomology of modular curves. In \cite{lrz}, Loeffler--Rockwood--Zerbes further generalize Ohta's result to Shimura varieties associated to arbitrary reductive groups. However, their results do not apply in this setting since they assume the SV5 axiom for Shimura varieties, which does not hold for our reductive group $G=\Res_{F/\mathbb Q} \GL_2$. The techniques used in this paper build on the methods introduced in \cite{lrz}, but we face additional technical difficulties due to the lack of the SV5 axiom in our setting. Throughout this paper, we have tried to emphasize the places where the SV5 axiom was needed in \textit{op.cit}, and the alternative arguments we make in the absence of this axiom. A key ingredient to obtaining  the perfect control (after localizing at $\mathfrak m$) in our main theorem is the recent work of Caraiani--Tamiozzo \cite{CT21} where they show that the \'etale cohomology of Hilbert modular varieties with torsion coefficients is concentrated in the middle degree after localizing at suitable maximal ideal of the Hecke algebra.  As a corollary of our main theorem, we can extend the vanishing results of Caraiani--Tamiozzo to  \'etale cohomology of the Hilbert varieties $Y_G(U_n)$ with non-trivial coefficients.

\begin{cor*}[Corollary \ref{lastcor}]
For all $n \geq 0$, we have $H^i_{\textrm{\'et}}(Y_G(U_n)_{\overline{\mathbb Q}}, V_{\lambda} )_{\mathfrak m} = 0$ when $i \neq g$. 
\end{cor*}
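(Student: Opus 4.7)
The plan is to deduce the corollary from the Caraiani--Tamiozzo vanishing for constant torsion coefficients by means of a Hochschild--Serre descent combined with Poincaré duality. As a preliminary step, I would upgrade the input vanishing of $H^i_{\textrm{\'et}}(Y_G(U_m)_{\overline{\mathbb Q}}, \mathbb F_p)_{\mathfrak m}$ for $i \neq g$ to $\mathcal O/p^k$ coefficients by dévissage, inducting on $k$ via the long exact sequence attached to $0 \to \mathcal O/p \to \mathcal O/p^k \to \mathcal O/p^{k-1} \to 0$; this produces $H^i_{\textrm{\'et}}(Y_G(U_m)_{\overline{\mathbb Q}}, \mathcal O/p^k)_{\mathfrak m} = 0$ for every level $m$ and every $i \neq g$.

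Next, for fixed $k$, I would pass to a sufficiently deep level $U' \subseteq U_n$ such that the mod-$p^k$ local system $V_\lambda/p^k$ trivializes after pullback to $Y_G(U')$: this is possible since the continuous action of $G(\mathcal O_F \otimes \mathbb Z_p)$ on the finite module $V_\lambda/p^k$ factors through some finite quotient, and one can find a congruence subgroup inside $U_n$ landing in its kernel. The Hochschild--Serre spectral sequence for the finite étale Galois cover $Y_G(U') \to Y_G(U_n)$ reads
$$E_2^{s,t} = H^s\bigl(U_n/U',\ H^t_{\textrm{\'et}}(Y_G(U')_{\overline{\mathbb Q}}, V_\lambda/p^k)_{\mathfrak m}\bigr) \Longrightarrow H^{s+t}_{\textrm{\'et}}(Y_G(U_n)_{\overline{\mathbb Q}}, V_\lambda/p^k)_{\mathfrak m},$$
and the trivialization together with the preliminary step forces $E_2^{s,t} = 0$ for $t \neq g$. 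The sequence therefore concentrates on the row $t = g$, and I deduce $H^i_{\textrm{\'et}}(Y_G(U_n)_{\overline{\mathbb Q}}, V_\lambda/p^k)_{\mathfrak m} = 0$ for every $i < g$.

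For the remaining range $i > g$, I would run the analogous Hochschild--Serre argument in compactly supported cohomology, using the corresponding vanishing of $H^i_{c, \textrm{\'et}}(Y_G(U_m)_{\overline{\mathbb Q}}, \mathcal O/p^k)_{\mathfrak m}$ in non-middle degrees, and then appeal to Poincaré duality on the smooth $g$-dimensional variety $Y_G(U_n)$ to convert the compact support statement back into ordinary cohomology. Finally, passing to the inverse limit in $k$ — legitimate because each cohomology module is finitely generated over $\mathcal O$ — produces the required vanishing $H^i_{\textrm{\'et}}(Y_G(U_n)_{\overline{\mathbb Q}}, V_\lambda)_{\mathfrak m} = 0$ for $i \neq g$. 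The principal obstacle will be the compactly supported input: either extracting it directly from \cite{CT21} or deducing it via Verdier duality, while verifying that the ``dual'' Hecke ideal $\mathfrak m^\vee$ retains non-solvable residual Galois image so that the Caraiani--Tamiozzo hypotheses apply there as well.
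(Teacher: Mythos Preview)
Your approach is viable and genuinely different from the paper's. The paper deduces the corollary in one line from the control theorem it has just established: Proposition~\ref{better} identifies $H^{i+g}_{\textrm{\'et}}(Y_G(U_n)_{\overline{\mathbb Q}}, V_\lambda)_{\mathfrak m}$ with $\Tor^{\Lambda_n}_{-i}$ of the anti-ordinary Iwasawa module, and the freeness of Theorem~\ref{mainthm}(a) kills these Tor groups for $i\neq 0$. Everything is thus internal to the Tor-spectral-sequence machinery of Section~\ref{Section3}, and the only external input is the ordinary-cohomology form of Caraiani--Tamiozzo already quoted as Theorem~\ref{ct}.

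Your route bypasses the control theorem and argues directly by Hochschild--Serre descent plus Poincar\'e duality. This is more elementary and is not tied to the anti-ordinary projector or to the self-duality constraint on $\lambda$. The price is two extra ingredients the paper avoids: the compactly supported variant of \cite{CT21} (proved there, but not stated in the present paper) and the check that $\mathfrak m^\vee$ retains non-solvable image (immediate, since $\overline{\rho_{\mathfrak m^\vee}}$ is a character twist of $\overline{\rho_{\mathfrak m}}$). One technical point to watch: the deep level $U'$ on which $V_\lambda/p^k$ trivialises cannot be taken among the $U_m$, since those still contain the full upper-unipotent $N_0$, which acts nontrivially on $V_\lambda$; you must pass to a genuine principal congruence subgroup at $p$. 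This is harmless because Theorem~\ref{ct} applies at arbitrary neat level, but note that the Galois group of $Y_G(U') \to Y_G(U_n)$ is then a quotient of $U_n/U'$ by central units rather than $U_n/U'$ itself --- irrelevant for your purposes, since all you use is that the $E_2$-page is concentrated in the row $t=g$.
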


\subsection{Arithmetic applications.}
The results of the paper will be used in forthcoming work of the author \cite{She24} to show that the Asai--Flach Euler system associated to a quadratic Hilbert modular form, constructed by Lei--Loeffler--Zerbes in \cite{LLZ18}, varies in Hida families. This in turn is an important ingredient in recent work of Grossi--Loeffler--Zerbes \cite{glz2} on the proof of the Bloch--Kato conjecture in analytic rank zero for the Asai representation of a quadratic Hilbert modular form. We also expect that this work can find applications in the study of $p$-adic families of various other global cohomology classes in the Hilbert setting such as, for instance, the Hirzebruch--Zagier cycles considered in \cite{BCF19} and \cite{FJ24}.  

\subsection{Comparison with other work.} We note that there is related work of Dimitrov \cite{Dim13} which also establishes control theorems for certain Hilbert modular varieties (see Section 3 of \textit{op.cit.}), but the results in \textit{op.cit.} make stronger hypotheses on the relevant Galois representations in consideration.  In particular, the results in \cite{Dim13} are conditional on two hypotheses (a certain global big image assumption and a Fontaine--Laffaille type assumption on local weights) stated in Section 0.3 of \textit{op.cit.}; while we assume that the image $\overline{\rho_{\mathfrak m}}$ is not solvable to prove our main theorem,  we do not make any assumption similar to the second hypothesis referenced above.

\subsection{Acknowledgements}
I would like to thank my advisor David Loeffler for his support and guidance, as well as very helpful feedback on previous drafts of this paper.  I am also grateful to Rob Rockwood, Matteo Tamiozzo, Chris Williams and Ju-Feng Wu for helpful discussions in connection with this paper. Lastly, I am very grateful to the anonymous referee for their careful reading of the paper and for very helpful comments and suggestions.

\section{Background on Hilbert modular varieties} \label{Section2}

In this section, we establish some basic properties of the Hilbert modular varieties $Y_G(U_n)$ which we work with. We also follow the method in \cite{Urb11} to establish a relation between the Betti cohomology of these varieties and the group cohomology of their corresponding arithmetic subgroups. 

\subsection{Notation.} 
We begin by setting some notation that will remain fixed in the paper. 
Let $F$ be a totally real number field of degree $g$ with ring of integers $\mathcal O_F$ and discriminant $\Delta_F$. 
We fix a numbering $\{\sigma_1, \ldots, \sigma_g\}$ of real embeddings of $F$ into $\mathbb C$. We let $F^{\times +}$ (resp. $\mathcal O_F^{\times +})$ denote the totally positive elements in $F^{\times}$ (resp. $\mathcal O_F^{\times})$. Let $\mathcal H$ denote the upper half plane and let $\mathcal H_F$ denote the set of elements of $F \otimes \mathbb C$ of totally positive imaginary part; note that $\mathcal H_F$ can be identified with the product of $g$ copies of $\mathcal H$.  We let $p$ be an odd prime that is unramified in $F$.  We let $\mathbb A_f$ denote the finite adeles of $\mathbb Q$, $\mathbb A_f^{(p)}$ the finite adeles away from $p$ and $\mathbb A_{F, f}$ the finite adeles of $F$. We let $G$ to be the algebraic group $\Res_{F/\mathbb Q} \GL_2$ over $\mathbb Q$.

\subsection{Shimura varieties for $G$}
If $K \subseteq G(\mathbb A_f)$ is an open compact subgroup,  its corresponding Shimura variety $Y_G(K)$ is a quasi-projective variety with a canonical model over the reflex field $\mathbb Q$ whose complex points are given by 
$$
Y_G(K)(\mathbb C)= G(\mathbb Q)^+ \char`\\[ G(\mathbb A_f) \times \mathcal H_F]/K. 
$$
\
The Shimura varieties $Y_G(K)$ are called Hilbert modular varieties. 

\begin{defin}
We say than an open compact subgroup $K \subseteq G(\mathbb A_f)$ is sufficiently small if for every $h \in G(\mathbb A_f)$ the quotient
$$
\frac{G(\mathbb Q)^+ \cap hKh^{-1}}{K \cap \left\{ \begin{pmatrix} 
 u &  0 \\
 0 & u
\end{pmatrix}: u \in \mathcal O_F^{\times} \right \} }
$$
acts without fixed points on $\mathcal H_F$. 
\end{defin}

\begin{rem}
The above definition is slightly different from  \cite[Definition 2.2.1]{LLZ18}; we have used $\mathcal O_F^{\times}$ in the denominator rather than $\mathcal O_F^{\times +}$ used in $\textit{op.cit.}$ 
\end{rem}

If $K \subseteq  G(\mathbb A_f)$ is sufficiently small, then $Y_G(K)$ is smooth. We also note that 
if $K_1 \subseteq K_2$ is an inclusion of open compact subgroups and $K_2$ is sufficiently small, then the map $Y_G(K_1) \rightarrow Y_G(K_2)$ is a finite \'etale Galois cover. 

\subsection{The Hilbert modular variety $Y_G(U_n)$}

\begin{defin}
For each $n \geq 1$, we let 
$$
U_{n, p}:=\left\{ \begin{pmatrix} 
 a &  b \\
 c & d
\end{pmatrix} \in \GL_2(\mathcal O_F \otimes \mathbb Z_p):  \begin{pmatrix} 
 a &  b \\
 c & d
\end{pmatrix} \equiv \begin{pmatrix} 
x &  * \\
 0 & x
\end{pmatrix} \textrm{ mod } p^n  \textrm{ for some } x \in \mathcal O_F \otimes \mathbb Z_p \right \} 
$$
\end{defin}

 When $n=0$, we let $U_{n, p}$ denote the Iwahori subgroup \textit{i.e.} the subgroup of $\GL_2(\mathcal O_F \otimes \mathbb Z_p)$ consisting of those matrices which are upper triangular mod $p$. 
Let $\mathfrak N$ be an ideal of $\mathcal O_F$ which does not divide 2, 3, or $\Delta_F$. We fix a prime-to-$p$ open compact subgroup $$U^{(p)} =\left \{ g \in G(\mathbb A_f^{ (p)}): g \equiv  \begin{pmatrix} 
* &  * \\
 0 & 1
\end{pmatrix}  \textrm{ mod } \mathfrak N \right \}$$  
and let $U_n=U^{(p)} U_{n, p}$. 
By  \cite[Lemma 2.1]{Dim09}, we have that $U_n$ is sufficiently small for all $n \geq 1$ and that the determinant map $\det: U_n \rightarrow  (\mathcal O_F \otimes \widehat{\mathbb Z})^{\times}$ is surjective away from $p$. 

The inclusion $U_{n+1} \hookrightarrow U_n$ induces a map of Shimura varieties $\phi_n: Y_G(U_{n+1}) \rightarrow Y_G(U_n).$

\begin{prop} \label{deg}
The degree of the map $\phi_n$ is $[U_{n}:U_{n+1}]$. 
\end{prop}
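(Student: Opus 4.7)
The plan is to compute the degree of $\phi_n$ directly as the cardinality of a generic geometric fiber, using the adelic description
$$Y_G(U_n)(\mathbb{C}) = G(\mathbb{Q})^+ \backslash (G(\mathbb{A}_f) \times \mathcal{H}_F) / U_n.$$
For $n \geq 1$ both $U_n$ and $U_{n+1}$ are sufficiently small, so $\phi_n$ is a finite étale cover of smooth varieties whose degree is constant on fibers; for $n=0$ the same calculation still produces the degree on the smooth locus (the Iwahori quotient being a posteriori smooth, or alternatively as an orbifold degree).

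Fix $[g,z] \in Y_G(U_n)(\mathbb{C})$ with $z \in \mathcal{H}_F$ generic, so that its stabilizer in $G(\mathbb{Q})^+$ is exactly the center $Z(\mathbb{Q})^+ = F^{\times +}$. The fiber $\phi_n^{-1}([g,z])$ is represented by classes $[gu,z]$ as $u$ ranges over $U_n/U_{n+1}$, with the identification $[gu,z] = [gu',z]$ in $Y_G(U_{n+1})$ holding iff there exist $\alpha \in F^{\times +}$ (giving a central $\gamma = \alpha \cdot I$ fixing $z$) and $k \in U_{n+1}$ with $\alpha u = u' k$. Since $\alpha \cdot I$ is central, this forces $\alpha \cdot I \in U_n$, so the identifications among the cosets $u U_{n+1}$ are governed entirely by the subgroup $Z \cap U_n$, where $Z := \{u \cdot I : u \in F^{\times +}\} \subseteq G(\mathbb{A}_f)$.

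The crux is the observation that $Z \cap U_n$ is \emph{independent of $n$}. By direct inspection, any scalar $u \cdot I$ with $u \in (\mathcal{O}_F \otimes \mathbb{Z}_p)^{\times}$ lies in $U_{n,p}$ for every $n \geq 0$ (taking the parameter $x = u$), so the only constraint on $u \in \mathcal{O}_F^{\times +}$ comes from $U^{(p)}$, yielding
$$Z \cap U_n = \{u \in \mathcal{O}_F^{\times +} : u \equiv 1 \pmod{\mathfrak{N}}\}$$
for all $n$. Granted this, any admissible $\alpha$ lies in $Z \cap U_{n+1}$, and hence $u^{-1} u' = \alpha k^{-1} \in U_{n+1}$, so distinct cosets in $U_n/U_{n+1}$ remain distinct in the fiber. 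This gives $\deg \phi_n = [U_n : U_{n+1}]$. The single genuine point to be careful about is the stability of $Z \cap U_n$ in $n$: this is precisely the feature of the $U_n$'s (allowing the scalar $x$ at $p$ to be unconstrained) that compensates for the failure of SV5 for $G = \Res_{F/\mathbb{Q}} \GL_2$ flagged by the author, and once it is in hand the fiber computation is immediate.
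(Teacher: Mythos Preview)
Your argument is correct and follows essentially the same route as the paper: both compute the fiber as the cosets $U_n/U_{n+1}$ and rule out collisions by observing that any central element forcing an identification already lies in $U_{n+1}$, the key point being that scalar matrices automatically satisfy the congruence defining $U_{n,p}$. One small slip: the stabilizer of a generic $z$ in $G(\mathbb Q)^+$ is all of $F^\times$ (since $\det(u\cdot I)=u^2$ is always totally positive), not just $F^{\times +}$; the paper handles this by invoking the sufficiently-small hypothesis directly rather than genericity, but your computation of $Z\cap U_n$ goes through unchanged with $\mathcal O_F^\times$ in place of $\mathcal O_F^{\times +}$.
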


\begin{proof}
Pick any element $[(x, g)] \in Y_G(U_n)(\mathbb C)=G(\mathbb Q)^+ \char`\\[ G(\mathbb A_f) \times \mathcal H_F]/U_n$ and note that
$$
\phi_n^{-1}( [(x, g)] ) = \{ [(x, g u_i)]: i \in I\}, 
$$
where $\{u_i\}_{i \in I}$ is a set of representatives of $U_n/U_{n+1}$.  It suffices to prove that 
$
[(x, gu_i)] \neq [(x, g u_j)]
$
when $i \neq j$. Suppose for contradiction that $[(x, gu_i)] = [(x, g u_j)]
$ when $i \neq j$.  Then there exist $h \in G(\mathbb Q)^+$ and $k \in U_{n+1}$ such that 
$(hx, hgu_ik)=(x, gu_j)$.  In particular, we conclude that $h \in G^+(\mathbb Q) \cap g U_n g^{-1}$. Since $hx=x$ and since $U_n$ is sufficiently small, we have by definition that 
$$
h \in U_n \cap \left\{ \begin{pmatrix} 
 u &  0 \\
 0 & u
\end{pmatrix}: u \in \mathcal O_F^{\times} \right\}= \left\{ \begin{pmatrix} 
 u &  0 \\
 0 & u
\end{pmatrix}: u \in \mathcal O_F^{\times} \text  { and } u \equiv 1 \mod \mathfrak N \right\}. 
$$
Thus, $h \in U_{n+1}$ as well.  Using the equality $hgu_ik= g u_j$ and the fact that $h$ lies in the centre of $G(\mathbb A_f)$, we can now conclude that $u_i h k=u_j$. Since $hk$ lies in $U_{n+1}$, this contradicts the fact that 
$\{u_i\}_{i \in I}$ is a set of representatives for $U_n/U_{n+1}$. 
\end{proof}

\begin{rem} \label{SV5}
The SV5 axiom for Shimura varieties (see \cite[pp.75]{Milne}) states that if $(G, X)$ is a Shimura datum, then the centre $Z$ is isogenous to the product of a $\mathbb Q$-split torus and an $\mathbb R$-anisotropic torus.  An equivalent formlaution is that $Z(\mathbb Q)$ is discrete in $Z(\mathbb A_f)$. 
In \cite{loeffler22}, Loeffler showed that if the SV5 axiom is satisfied, and if $K_1$ and $K_2$ are open compact subgroups of $G(\mathbb A_f)$ with $K_1 \subseteq K_2$, then the degree of the corresponding map of Shimura varieties $Y_G(K_1) \rightarrow Y_G(K_2)$ equals the index $[K_2:K_1]$. The group $G=\textrm{Res}_{F/\mathbb Q}  \GL_2$ that we are working with does not satisfy the SV5 axiom  (this is essentially because the unit group $\mathcal O_F^{\times}$ is infinite). Nevertheless, with our choice of level groups $U_n$,  Proposition \ref{deg} shows that the desired claim still holds. 
\end{rem}

\subsection{The number of components of $Y_G(U_n)$}

Let $I(F)$ denote the group of fractional ideals of $F$, and let $\textrm{Cl}^{+}(F):= I(F)/\{ (\beta): \beta  \textrm{ totally positive} \}$ denote the narrow class group of $F$. Let $h^+$ denote the narrow class number of $F$.

\begin{prop} \label{calc}
The Hilbert modular variety $Y_G(U_n)$ has $h^+ \cdot |\mathcal O_F^{\times +}\backslash (\mathcal O_F \otimes \mathbb Z_p)^{\times}  /\det(U_{n, p}) |$ connected components. 
\end{prop}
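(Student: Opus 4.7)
The plan is to count connected components via strong approximation, reducing to an adelic calculation involving the narrow class group.

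First I would identify $\pi_0(Y_G(U_n))$. Since $\mathcal{H}_F \cong \mathcal{H}^g$ is connected, each double coset in $G(\mathbb{Q})^+ \backslash G(\mathbb{A}_f)/U_n$ gives a single connected component $(G(\mathbb{Q})^+ \cap gU_n g^{-1}) \backslash \mathcal{H}_F$, so
$$
\pi_0(Y_G(U_n)(\mathbb{C})) \cong G(\mathbb{Q})^+ \backslash G(\mathbb{A}_f)/U_n.
$$
Next, I would invoke strong approximation for the simply connected group $G^{\mathrm{der}} = \Res_{F/\mathbb{Q}} \mathrm{SL}_2$, which applies because $\mathrm{SL}_2(F \otimes \mathbb{R})$ is noncompact. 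Combined with the fact that $\det(G(\mathbb{Q})^+) = F^{\times +}$ and $\det: G(\mathbb{A}_f) \twoheadrightarrow \mathbb{A}_{F,f}^{\times}$ is surjective, this gives a bijection
$$
G(\mathbb{Q})^+ \backslash G(\mathbb{A}_f)/U_n \;\xrightarrow{\;\sim\;}\; F^{\times +} \backslash \mathbb{A}_{F,f}^{\times} / \det(U_n).
$$

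Next I would split off the narrow class group. Recall the adelic description $\mathrm{Cl}^+(F) = F^{\times +} \backslash \mathbb{A}_{F,f}^{\times} / \widehat{\mathcal{O}}_F^{\times}$, where $\widehat{\mathcal{O}}_F^{\times} := (\mathcal{O}_F \otimes \widehat{\mathbb{Z}})^{\times}$. The inclusion $\det(U_n) \subseteq \widehat{\mathcal{O}}_F^{\times}$ yields a surjection
$$
F^{\times +} \backslash \mathbb{A}_{F,f}^{\times} / \det(U_n) \twoheadrightarrow \mathrm{Cl}^+(F)
$$
whose fiber over every class has cardinality $|\widehat{\mathcal{O}}_F^{\times}/(\mathcal{O}_F^{\times +} \cdot \det(U_n))|$, where $\mathcal{O}_F^{\times +} = F^{\times +} \cap \widehat{\mathcal{O}}_F^{\times}$ embeds diagonally. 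Thus the total count factors as $h^+$ times this fiber size.

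Finally I would compute the fiber. Using the surjectivity of $\det : U^{(p)} \to \widehat{\mathcal{O}}_F^{(p),\times}$ recorded after the definition of $U_n$, we obtain $\det(U_n) = \widehat{\mathcal{O}}_F^{(p),\times} \times \det(U_{n,p})$. Decomposing $\widehat{\mathcal{O}}_F^{\times} = \widehat{\mathcal{O}}_F^{(p),\times} \times (\mathcal{O}_F \otimes \mathbb{Z}_p)^{\times}$ and killing the prime-to-$p$ factor inside $\det(U_n)$ gives
$$
\widehat{\mathcal{O}}_F^{\times} \big/ \bigl(\mathcal{O}_F^{\times +} \cdot \det(U_n)\bigr) \;\cong\; (\mathcal{O}_F \otimes \mathbb{Z}_p)^{\times} \big/ \bigl(\mathcal{O}_F^{\times +} \cdot \det(U_{n,p})\bigr),
$$
with $\mathcal{O}_F^{\times +}$ acting via its diagonal embedding into the $p$-component. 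Multiplying by $h^+$ yields the stated formula.

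The only mild subtlety is keeping the totally positive signs straight throughout—in particular making sure that strong approximation is applied on $G(\mathbb{Q})^+$ and produces $F^{\times +}$ (not all of $F^{\times}$), and that $\mathcal{O}_F^{\times +} = F^{\times +} \cap \widehat{\mathcal{O}}_F^{\times}$ is exactly the totally positive units. Once these sign conventions are pinned down the computation is a clean unraveling of the strong approximation double coset.
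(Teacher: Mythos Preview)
Your proof is correct and follows essentially the same approach as the paper: both reduce via strong approximation for $\Res_{F/\mathbb Q}\mathrm{SL}_2$ to the double quotient $F^{\times +}\backslash \mathbb A_{F,f}^{\times}/\det(U_n)$, then split off the narrow class group and use the surjectivity of $\det$ on $U^{(p)}$ to pass to the $p$-component. The only cosmetic difference is that the paper writes the decomposition as an explicit disjoint union over $\mathrm{Cl}^+(F)$ of translated cosets, whereas you phrase it as a surjection onto $\mathrm{Cl}^+(F)$ with fibers of constant size; the underlying computation is identical.
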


\begin{proof}
Note that the map
$\displaystyle{
\mathbb A_{F, f}^{\times} \rightarrow \textrm{Cl}^+(F) }$ defined via $\displaystyle{ (\alpha)_\mathfrak p \mapsto \prod_\mathfrak p \mathfrak p^{v_\mathfrak p(\alpha_\mathfrak p) }}$
has kernel $\prod_{\mathfrak p} \mathcal O_{F, \mathfrak p}^{\times} \cdot F^{\times +}$.   For each $x \in \textrm{Cl}^+(F)$, choose a preimage $\alpha_x \in \mathbb A_{F, f}^{\times}$. Hence, we have a decomposition $\displaystyle{\mathbb A_{F, f}^{\times}=\bigsqcup_{x \in \textrm{Cl}^+(F)} (\alpha_x \cdot \prod_{\mathfrak p} \mathcal O_{F, \mathfrak p}^{\times} \cdot F^{\times +})}$ as sets.  By strong approximation, the connected components of $Y_G(U_n)$ are indexed by $F^{\times}_+\backslash \mathbb A_{F, f}^{\times}/\det(U_n)$. This set is in bijection with
\begin{align*}
 \bigsqcup_{x \in \textrm{Cl}^+(F)} F^{\times +}\backslash \prod_{\mathfrak p} (\alpha_x \cdot  \mathcal O_{F, \mathfrak p}^{\times} \cdot F^{\times +} )/\det(U_n) 
 &\cong  \bigsqcup_{x \in \textrm{Cl}^+(F)}  \mathcal O_F^{\times^+} \backslash (\alpha_x \cdot  \prod_{\mathfrak p} \mathcal O_{F, \mathfrak p}^{\times} ) /\det(U_n) \\
& \cong  \bigsqcup_{x \in \textrm{Cl}^+(F)}  \mathcal O_F^{\times+} \backslash (\mathcal O_F \otimes \mathbb Z_p)^{\times}  /\det(U_{n, p}),
\end{align*}
where the last equality follows since the determinant map $\det: U_n \rightarrow (\mathcal O_F \otimes \widehat{\mathbb Z})^{\times}$ is surjective away from $p$. 
\end{proof}

\subsection{Hecke action} \label{hecke}

For $K \subseteq G(\mathbb A_f)$ an open compact subgroup, we let $\mathbb T_K(G)=\mathbb Z[K\backslash G(\mathbb A_f)/K]$ be the Hecke-algebra of compactly supported bi-invariant functions on $G(\mathbb A_f)$ with multiplication given by convolution.  Let $g \in G(\mathbb A_f)$ and let $K_g=K \cap g K g^{-1}$; we have a correspondence $[KgK]$

\begin{center}
\begin{tikzcd}
Y_G(K_g)\arrow[r,shift left=1ex]{}{f}\ar{d} 
  & Y_G(K_{g^{-1}}) \ar{d}
\\
Y_G(K)\arrow[r,shift left=1ex,dashed]{}{[KgK]} 
  & Y_G(K)
\end{tikzcd}
\end{center}

where the vertical maps are canonical projections and the upper-half horizontal map $f$ on complex points is induced by multiplication by $g$. We obtain an action of $\mathbb T_K(G)$ on $H^i_{\textrm{\'et}}(Y_G(K)_{\overline{\mathbb Q}}, \mathbb F_p)$ and $H^i_{\textrm{\'et}}(Y_G(K)_{\overline{\mathbb Q}}, \mathbb Z_p)$. Let $\Sigma$ be the set of places of $F$ containing 
all primes dividing $\mathfrak N$ and all primes above $p$. 
Let 
$$
\mathbb T:=\bigotimes^{'}_{v \not \in \Sigma} \mathbb Z[ \GL_2(\mathcal O_v) \backslash \GL_2(F_v)/ \GL_2(\mathcal O_v) ]
$$
denote the abstract spherical algebra away from $\Sigma$. We note that $\mathbb T$ is a subalgebra of $\mathbb T_K(G)$; it is also commutative, generated by the following Hecke operators $\mathcal T_v$ and $\mathcal S_v^{\pm 1}$ for every finite place $v \not \in \Sigma$; for  every such $v$ we choose a uniformizer $\mathcal \varpi_v$ of $\mathcal O_v$ and define 

\begin{itemize}
\item  $\mathcal T_v$ to be the double coset $\GL_2(\mathcal O_v) \begin{pmatrix}
    \varpi_v & 0 \\ 0 & 1
    \end{pmatrix} \GL_2(\mathcal O_v).$
 \item   $\mathcal S_v$ to the double coset $\GL_2(\mathcal O_v) \begin{pmatrix}
    \varpi_v & 0 \\ 0 & \varpi_v
    \end{pmatrix} \GL_2(\mathcal O_v).$
 \end{itemize}   

\subsection{Betti cohomology} \label{betti}

Let $\Gamma$ be a subgroup of $G^+(\mathbb Q)$ and let $\overline{\Gamma}$ be its image in $G(\mathbb Q)/Z_G(\mathbb Q)$. 
We assume that $\overline{\Gamma}$ has no non-trivial elements of finite order; hence, it acts freely and continuously on $\mathcal H_F$. 
In this subsection, we closely follow \cite{Urb11} to establish a relation between the group cohomology of $\overline{\Gamma}$ and the Betti cohomology of the corresponding Hilbert modular variety. 

By the work of Borel--Serre, there exists a canonical compactification $\overline{\Gamma} \backslash \overline{\mathcal H_F}$, where $\overline{\mathcal H_F}$ is a contractible real manifold with corners.  Since $\overline{\Gamma} \backslash \overline{\mathcal H_F}$ is compact, we may choose a finite triangulation of $\overline{\Gamma} \backslash \overline{\mathcal H_F}$. We may pull it back to $\overline{\mathcal H_F}$ via the canonical projection 
$ \overline{\mathcal H_F} \rightarrow \overline{\Gamma} \backslash \overline{\mathcal H_F}$. Let $C_q(\overline \Gamma)$ be the free $\mathbb Z$-module over the set of $q$-dimensional simplices of the triangulation obtained by pull-back to $\overline{\mathcal H_F}$. Since the action of $\overline{\Gamma}$ on $\overline{\mathcal H_F}$ is free,  and since the triangulation of $\overline{\Gamma} \backslash \overline{\mathcal H_F}$ is finite, the $C_q(\overline \Gamma)$'s are free $\mathbb Z[\overline{\Gamma}]$-modules of finite type. We also note that 
$$
0 \rightarrow C_d(\overline \Gamma) \xrightarrow{\partial_{d-1}} \cdots \xrightarrow{\partial_1} C_0(\overline \Gamma) \rightarrow 0
$$
is a complex computing the homology of $\overline{\mathcal H_F}$. Since $\overline{\mathcal H_F}$ is contractible, this complex is exact except in degree zero and $H_0(\overline{\mathcal H_F}, \mathbb Z)= C_0(\overline \Gamma)/\partial_0(C_1(\overline \Gamma))=\mathbb Z$.  Thus, in summary, we have that 
$$
0 \rightarrow C_d(\overline \Gamma) \xrightarrow{\partial_{d-1}} \cdots \xrightarrow{\partial_1} C_0(\overline \Gamma) \rightarrow  \mathbb Z \rightarrow 0
$$
is an exact sequence of finite free $\mathbb Z[\overline{\Gamma}]$-modules. 
If $M$ is a $\Gamma$-module,  we let
$\mathcal C^{\bullet}(\overline{\Gamma}, M)$ denote the complex 
$$
0 \rightarrow \textrm{Hom} (C_0(\overline{\Gamma}), M) \rightarrow \cdots \rightarrow  \textrm{Hom}(C_d(\overline{\Gamma}), M). 
$$
Thus,  $H^i(\overline{\Gamma}, M)$ is the $i$-th cohomology group of the complex $\mathcal C^{\bullet}(\overline {\Gamma}, M)$. 
\vspace{2mm}
\\

Let $K$ be an open compact subgroup of $G(\mathbb A_f)$ which is sufficently small and let $M$ be a left $K$-module acting via its projection to $K_p$, the image of $K$ in $G(\mathbb Q_p)$.  The corresponding Hilbert modular variety 
$Y_G(K)$ satisfies 
$$
Y_G(K) \cong  \bigsqcup_{j \in J} \Gamma_j \backslash \mathcal H_F,
$$
where $J$ is a finite set and for each $j \in J$, $\Gamma_j=g_j K g_j^{-1} \cap G^+(\mathbb Q)$ for some $g_j \in G(\mathbb A_f)$. As before, we let $\overline{\Gamma_j}$ denote the image of $\Gamma_j$ in $G(\mathbb Q)/Z_G(\mathbb Q)$.  We set 
\begin{align} \label{p}
C^{\bullet}(K, M):= \bigoplus_{j \in J} C^{\bullet}(\overline{\Gamma_j}, M)
\end{align}
\vspace{2mm}
\\
Let $\overline{Y_G}:=G(\mathbb Q)^+ \backslash G(\mathbb A_f) \times \overline{\mathcal H_F}$. Then $\overline{Y_G}(K):=\overline{Y_G}/K$ is the Borel--Serre compactification of $Y_G(K)$. 
Let $\pi: \overline{Y_G} \rightarrow \overline{Y_G}(K)$ denote the canonical projection. We choose a finite triangulation of $\overline{Y_G}(K)$ and pull it back via $\pi$. Let $C^q(K)$ denote the corresponding chain complex equipped with a right action of $K$.  Then
$$
C^{\bullet}(K, M) = \textrm{Hom}_K(C^q(K), M).   
$$
Thus, $C^{\bullet}(K, M)$ also computes the cohomology of the local system $M$ on $Y_G(K)$ and so we have an isomorphism 
\begin{align} \label{eq}
H^i(Y_G(K), M) \cong \bigoplus_{j \in J} H^i(\overline{\Gamma}_j, M). 
\end{align}

\section{Construction of a Tor spectral sequence} \label{Section3}

In this section, we construct a Tor descent spectral sequence which will be an important tool to relate the Iwasawa cohomology module $H^g_{\textrm{Iw}}(Y_G(U_\infty)_{\overline{\mathbb Q}}, \mathcal O)$  to the cohomology of $Y_G(U_n)$ at finite layers.

\subsection{General notation} \label{3.1}
We consider the group scheme $\textrm{GL}_2$ over $\mathbb Z_p$ and we let $B_2$, $N_2$ and $T_2$ to be the subgroups of upper-triangular, unipotent and diagonal matrices respectively. Following \cite{lrz}, we set the following notation:

\begin{itemize}
    \item  $Q= \textrm{Res}_{\mathcal O_F \otimes \mathbb Z_p/\mathbb Z_p} \hspace{1mm}  B_2$. 
     \item  $N= \textrm{Res}_{\mathcal O_F \otimes \mathbb Z_p/\mathbb Z_p} \hspace{1mm}  N_2$. 
    \vspace{2mm}
    \\
    \item $S= \textrm{Res}_{\mathcal O_F \otimes \mathbb Z_p/ \mathbb Z_p} \hspace{1mm}  T_2$. 
    \vspace{2mm}
    \\
    \item $E/\mathbb Q_p$ is a finite extension with ring of integers $\mathcal O$ such that $S$ splits over $E$.  
    \vspace{2mm}
    \\
  \item $S^{0}= \textrm{Res}_{\mathcal O_F \otimes \mathbb Z_p / \mathbb Z_p} \hspace{1mm}  \mathbb G_m$, viewed as a subgroup of $S$ via the diagonal embedding. 
  \vspace{1mm}
  \\
  \item $Q^0$ denotes the preimage of $S^0$ under the projection $Q \rightarrow S$
  \newline 
  \textit{i.e.} $Q^0(\mathbb Z_p)= \left \{ \begin{pmatrix}
    a & b \\ 0 & a
    \end{pmatrix} \in \GL_2(\mathcal O_F \otimes \mathbb Z_p)| a, b \in \mathcal O_F \otimes \mathbb Z_p \right \}$. 
  \vspace{1mm}
  \\
  \item $\mathfrak S=S(\mathbb Z_p)/S^0(\mathbb Z_p)$ identified with $(\mathcal O_F \otimes \mathbb Z_p)^{\times}$ via the short exact sequence
  $$
1 \rightarrow S^0(\mathbb Z_p) \rightarrow S(\mathbb Z_p) \rightarrow (\mathcal O_F \otimes \mathbb Z_p)^{\times} \rightarrow 1. 
  $$
  where the map 
  $$
S(\mathbb Z_p) \rightarrow (\mathcal O_F \otimes \mathbb Z_p)^{\times} 
\hspace{3mm} \text{ is given by } \begin{pmatrix}
    a & 0 \\ 0 & b
    \end{pmatrix} \mapsto ab^{-1}. 
  $$
  \vspace{1mm}
  \\
  \item $\Lambda=\mathcal O \llbracket \mathfrak S \rrbracket$. 
  \vspace{1mm}
  \\
  \item $\tau=\begin{pmatrix}
    p & 0 \\ 0 & 1
    \end{pmatrix} \in \GL_2(\mathbb Q_p \otimes \mathcal O_F)$.

\item $N_r=\tau^r N(\mathbb Z_p) \tau^{-r}=\left \{ \begin{pmatrix}
    1 & x \\ 0 & 1
    \end{pmatrix} \in \GL_2(\mathcal O_F \otimes \mathbb Z_p)| \hspace{1mm} x \equiv 0 \textrm{ mod  } p^r \right \} $.     

\item   $\overline{N_r}=\tau^{-r} \overline{N}(\mathbb Z_p) \tau^{r}=\left \{ \begin{pmatrix}
    1 & 0 \\ x & 1
    \end{pmatrix} \in \GL_2(\mathcal O_F \otimes \mathbb Z_p)| \hspace{1mm} x \equiv 0 \textrm{ mod  } p^r \right \} $

\item $L_r=\{\ell \in S(\mathbb Z_p): \ell \in S^0(\mathbb Z_p) \textrm{ mod  } p^r  \}=\left \{ \begin{pmatrix}
    a & 0 \\ 0 & d
    \end{pmatrix} \in \GL_2(\mathcal O_F \otimes \mathbb Z_p)| \hspace{1mm} a \equiv d \textrm{ mod  } p^r \right \}$.

\item $V_r=\overline{N_r} L_r N_0$. 
\end{itemize}

\begin{prop}
For every $r \geq 1$, we have that $V_r=U_{r, p}$. 
\end{prop}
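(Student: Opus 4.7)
The plan is to prove the equality $V_r = U_{r,p}$ by mutual inclusion. Both containments reduce to explicit $2 \times 2$ matrix manipulations over $\mathcal{O}_F \otimes \mathbb{Z}_p$; the main (mild) technical point is checking that a relevant diagonal entry is a unit so that an Iwahori-type decomposition exists with entries in $\mathcal{O}_F \otimes \mathbb{Z}_p$.

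For $V_r \subseteq U_{r,p}$, I would take a general element
\[
\begin{pmatrix} 1 & 0 \\ p^r y & 1 \end{pmatrix} \begin{pmatrix} a & 0 \\ 0 & d \end{pmatrix} \begin{pmatrix} 1 & x \\ 0 & 1 \end{pmatrix}
\]
with $y, x \in \mathcal{O}_F \otimes \mathbb{Z}_p$ and $a \equiv d \mod p^r$, and multiply it out to get $\begin{pmatrix} a & ax \\ p^r y a & p^r y a x + d \end{pmatrix}$. Reducing mod $p^r$, the lower-left entry vanishes and both diagonal entries equal $a \equiv d$, so the product lies in $U_{r,p}$.

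For the reverse containment $U_{r,p} \subseteq V_r$, I would take $g = \begin{pmatrix} a & b \\ c & d \end{pmatrix} \in U_{r,p}$. The key preliminary observation is that $a$ is a unit in $\mathcal{O}_F \otimes \mathbb{Z}_p$: reducing mod $p$, the congruence $c \equiv 0$ forces $\det g \equiv ad \mod p$, and since $a \equiv d \equiv x \mod p^r$ (with $r \geq 1$), this common residue must be a unit in the semi-local ring $(\mathcal{O}_F \otimes \mathbb{Z}_p)/p$. Consequently $a^{-1} \in \mathcal{O}_F \otimes \mathbb{Z}_p$, and I can write the Iwahori-style decomposition
\[
g = \begin{pmatrix} 1 & 0 \\ c a^{-1} & 1 \end{pmatrix} \begin{pmatrix} a & 0 \\ 0 & d - c a^{-1} b \end{pmatrix} \begin{pmatrix} 1 & a^{-1} b \\ 0 & 1 \end{pmatrix}.
\]
It then remains to verify that each factor lies in the prescribed subgroup: since $c \equiv 0 \mod p^r$ and $a$ is a unit, $ca^{-1} \equiv 0 \mod p^r$, so the first factor is in $\overline{N_r}$; the last factor is in $N_0$ trivially; for the middle diagonal factor, $bc \equiv 0 \mod p^r$ implies $d - ca^{-1}b \equiv d \equiv a \mod p^r$, placing it in $L_r$.

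I do not expect any serious obstacle — the argument is essentially the standard Iwahori decomposition for $\GL_2$, transported through the Weil restriction and refined to track the mod $p^r$ congruences. The only point requiring a little care is the unit check for $a$, which uses precisely the assumption $r \geq 1$ (explaining why the proposition is stated only for $r \geq 1$, with the Iwahori case $r = 0$ singled out separately in the definition of $U_{n,p}$).
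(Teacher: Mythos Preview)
Your proof is correct and is exactly the ``direct matrix computation'' the paper alludes to without spelling out; you have supplied the details of the mutual inclusion via the standard Iwahori-type factorisation, including the necessary unit check for the upper-left entry. There is nothing to add.
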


\begin{proof}
This follows from a direct matrix computation. 
\end{proof}

We also set $V_0=\GL_2(\mathcal O_F \otimes \mathbb Z_p)$ and $V_0^{(s)}:= \tau^{-s}V_0\tau^{s} \cap V_0$ for $s \geq 1$; 
in other words, $V_0^{(s)}$ is the group of upper-triangular matrices $\textrm{mod } p^s:$

$$
V_0^{(s)=}\left \{ \begin{pmatrix}
    a & b \\ c & d
    \end{pmatrix} \in \GL_2(\mathcal O_F \otimes \mathbb Z_p)| a, b, c, d \in \mathcal O_F \otimes \mathbb Z_p \textrm{ and }  c \equiv 0 \textrm{ mod } p^s \right \}. 
$$
Similarly,  for $s \geq 0$ and $n \geq 0$, we set 
$$
U_{n, p}^{(s)}:= \tau^{-s}U_{n, p}\tau^{s} \cap U_{n, p}. 
$$

For our level groups $U_{n, p}$, we define the Hecke operator $\mathcal U_p'$ to be the double coset $U_{n, p} \begin{pmatrix}
    1 & 0 \\ 0 & p
    \end{pmatrix} U_{n, p}$  
and we define the analogue of Hida's anti-ordinary projection operator to be 
$$
e'_{\ord}= \lim_{n \to \infty} (\mathcal U_p')^{n!}. 
$$

The operator $e'_{\ord}$ acts on the cohomology of $Y_G(U_n)$. Let $M$ be a $\mathbb Z[U_{n}]$-module (acting via projection to $U_{n, p}$) with a compatible action of $\mathcal U_p'$ (in the sense that for any $u \in U_{n, p}^{(s)}$, the action of $\mathcal U_p'$ intertwines the action of $u$ and $\tau^{-s} u \tau^s$). For $K \subseteq G(\mathbb A_f)$ an open compact subgroup, the operator $e'_{\ord}$ also acts on the complexes $C^{\bullet}(K, M)$ introduced in Section \ref{betti} by lifting the action on cohomology (see \cite[pp.6]{lrz}).

\begin{lem} \label{hida}
Let $M$ be a  $\ZZ[U_{n}]$-module with a compatible action of $\mathcal U_p'$. The following diagram commutes on cohomology
$$
\begin{tikzcd}
\mathcal{C}^{\bullet}(U^{(p)}U_{n, p}^{ (s)}, M) \arrow[d, "(\mathcal U_p')^{s}"] \arrow[r, "\mathrm{cores}"] & \mathcal{C}^{\bullet}(U_{n}, M) \arrow[dl, "{[\tau^{s}]}_{*}"] \arrow[d, "(\mathcal U_p')^{s}"] \\
\mathcal{C}^{\bullet}(U^{(p)}U_{n,p}^{ (s)}, M) \arrow[r, "\mathrm{cores}"'] & \mathcal{C}^{\bullet}(U_{n}, M)
\end{tikzcd}
$$
\end{lem}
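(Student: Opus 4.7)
The diagram contains two triangles, both expressing a factorisation of the Hecke operator $(\mathcal{U}_p')^s$ through the intermediate subgroup $U^{(p)}U_{n,p}^{(s)}$. The lower triangle asserts that $(\mathcal{U}_p')^s$ on $\mathcal{C}^\bullet(U_n,M)$ equals $\mathrm{cores}\circ[\tau^s]_{*}$, while the upper triangle asserts the analogous factorisation $[\tau^s]_{*}\circ\mathrm{cores}=(\mathcal{U}_p')^s$ for the operator acting on $\mathcal{C}^\bullet(U^{(p)}U_{n,p}^{(s)},M)$. My plan is to reduce both to the standard coset decomposition of the double coset defining $(\mathcal{U}_p')^s$, and then verify each identity at the level of cochains.

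The key combinatorial input is the decomposition
\[
U_{n,p}\,\tau^{-s}\,U_{n,p}=\bigsqcup_i u_i\,\tau^{-s}\,U_{n,p},
\]
with $\{u_i\}$ a set of representatives for $U_{n,p}/U_{n,p}^{(s)}$; this follows from the general identity $K\pi K/K\cong K/(K\cap\pi K\pi^{-1})$ applied to $\pi=\tau^{-s}$, together with the defining equality $U_{n,p}^{(s)}=U_{n,p}\cap\tau^{-s}U_{n,p}\tau^s$. For the lower triangle, the correspondence representing the double coset $U_{n,p}\tau^{-s}U_{n,p}$ is
\[
Y_G(U_n)\xleftarrow{\pi_1} Y_G(U^{(p)}U_{n,p}^{(s)})\xrightarrow{\cdot\tau^{-s}} Y_G(U^{(p)}U_{n,p}^{[s]})\xrightarrow{\pi_2} Y_G(U_n),
\]
where $U_{n,p}^{[s]}=\tau^s U_{n,p}^{(s)}\tau^{-s}$. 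Unwinding shows that $(\mathcal{U}_p')^s$ factors as $\pi_{1,*}\circ(\cdot\tau^{-s})^{*}\circ\pi_2^{*}$, which is precisely $\mathrm{cores}\circ[\tau^s]_{*}$; the central scalar $p^s$ arising from $\bigl(\begin{smallmatrix}1&0\\0&p\end{smallmatrix}\bigr)^s=p^s\tau^{-s}$ is absorbed into the normalisation of the $\mathcal{U}_p'$-action on $M$. For the upper triangle, I would compute the composition $[\tau^s]_{*}\circ\mathrm{cores}$ directly on $f\in\mathrm{Hom}_{U^{(p)}U_{n,p}^{(s)}}(C,M)$: the corestriction produces the $U_n$-symmetrisation $\sum_i u_i\cdot f$ indexed by the same set of representatives $\{u_i\}$, and applying $[\tau^s]_{*}$ afterwards conjugates by $\tau^s$. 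The compatibility hypothesis — that $\mathcal{U}_p'$ intertwines the actions of $u\in U_{n,p}^{(s)}$ and $\tau^{-s}u\tau^s\in U_{n,p}$ on $M$ — is exactly what converts the $U_n$-equivariance produced by corestriction into the $U_{n,p}^{(s)}$-equivariance required on $A$, and one identifies the resulting sum with the action of $(\mathcal{U}_p')^s$ on $\mathcal{C}^\bullet(U^{(p)}U_{n,p}^{(s)},M)$.

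The main obstacle is bookkeeping rather than anything conceptual: keeping left versus right coset conventions straight, tracking the direction of the induced maps $\pi_1^{*}$, $\pi_{2,*}$ and the correspondence map, and correctly absorbing the central scalar $p^s$. The substantive input beyond this routine manipulation is the compatibility hypothesis on the action of $\mathcal{U}_p'$ on $M$, which is what makes the upper triangle commute; without it, the sums on $\mathcal{C}^\bullet(U^{(p)}U_{n,p}^{(s)},M)$ and $\mathcal{C}^\bullet(U_n,M)$ could not be identified. Finally, since we need commutativity only on cohomology, one has the flexibility to choose convenient chain-level models built from the Borel--Serre triangulation of Section~\ref{betti}, sidestepping subtleties about how the resolutions at different levels $U_n$ and $U^{(p)}U_{n,p}^{(s)}$ are compared.
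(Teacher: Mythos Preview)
Your sketch is correct in outline and in fact supplies more than the paper does: the paper's own proof is a one-line citation to \cite[Lemma~2.7.4]{lrz}, together with the remark that the SV5 axiom is not used there. Your decomposition of the square into two triangles, the coset identity $U_{n,p}\tau^{-s}U_{n,p}=\bigsqcup_i u_i\tau^{-s}U_{n,p}$ with $\{u_i\}$ a set of representatives for $U_{n,p}/U_{n,p}^{(s)}$, and the invocation of the compatibility hypothesis on $M$ to identify the two sums is exactly the standard argument one expects in the cited reference. Two cosmetic points: the phrase ``equivariance required on $A$'' is a slip (there is no $A$); and the remark about the central scalar $p^s$ being ``absorbed into the normalisation'' could be sharpened by observing that, since the complexes $\mathcal{C}^\bullet$ are built from $\overline{\Gamma}$-cohomology as in Section~\ref{betti}, central elements act only through their effect on $M$, so the double cosets of $\tau^{-s}$ and of $\begin{pmatrix}1&0\\0&p^s\end{pmatrix}$ give the same correspondence up to that action. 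Neither point is a gap.
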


\begin{proof}
This follows from \cite[Lemma 2.7.4]{lrz} (the SV5 axiom is not needed in the proof of this lemma). 
\end{proof}

\begin{prop} \label{3.7}
The corestriction maps induce isomorphisms 
$$
 e'_{\ord} H^i( Y_G(U^{(p)}U_{n, p}^{ (s)}), M) \cong  e'_{\ord} H^i(Y_G(U_n), M) \label{cor}. 
$$
\end{prop}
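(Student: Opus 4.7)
The plan is to deduce Proposition \ref{3.7} as a direct consequence of Lemma \ref{hida} together with the defining property of the anti-ordinary projector $e'_{\ord}$. The commuting diagram in Lemma \ref{hida} decomposes into two triangles, yielding on cohomology the identities
$$
\mathrm{cores} \circ [\tau^{s}]_{*} = (\mathcal{U}_p')^{s} \quad \text{on } H^i(Y_G(U_n), M),
$$
$$
[\tau^{s}]_{*} \circ \mathrm{cores} = (\mathcal{U}_p')^{s} \quad \text{on } H^i(Y_G(U^{(p)}U_{n,p}^{(s)}), M).
$$
So corestriction factors $(\mathcal{U}_p')^{s}$ from both sides, with $[\tau^{s}]_{*}$ playing the role of a candidate inverse (up to $(\mathcal{U}_p')^{s}$).

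Next, I would invoke the fact that $e'_{\ord} = \lim_{n \to \infty} (\mathcal{U}_p')^{n!}$ is, by construction, the idempotent projecting onto the maximal submodule on which $\mathcal{U}_p'$ acts invertibly. In particular, $(\mathcal{U}_p')^{s}$ is an isomorphism on $e'_{\ord} H^i(Y_G(U_n), M)$ and on $e'_{\ord} H^i(Y_G(U^{(p)}U_{n,p}^{(s)}), M)$. Applying $e'_{\ord}$ to both identities above (which commutes with all Hecke and trace maps in sight) therefore yields that $\mathrm{cores}$ admits both a left and a right inverse on the $e'_{\ord}$-parts, namely the composition $(\mathcal{U}_p')^{-s} \circ [\tau^{s}]_{*}$ interpreted appropriately on each side. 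Hence $e'_{\ord}\mathrm{cores}$ is an isomorphism, which is precisely the claim.

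There is essentially no obstacle here beyond correctly bookkeeping the directions of the maps: the argument is a general formalism whenever one has a Hecke correspondence factoring through two distinct level structures with an operator (here $(\mathcal{U}_p')^{s}$) controlling both compositions, and the corresponding operator is inverted by an ordinary projector. The work has already been absorbed into Lemma \ref{hida}, whose proof is independent of the SV5 axiom as noted in the excerpt, so the conclusion transfers directly to the Hilbert setting.
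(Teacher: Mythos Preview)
Your argument is correct and is exactly the approach the paper takes: the paper simply says the result follows from Lemma \ref{hida} (citing \cite[Corollary 2.7.5]{lrz}), and what you have written is precisely the standard unpacking of that deduction. The two triangles in the diagram give $\mathrm{cores}\circ[\tau^s]_*=(\mathcal U_p')^s$ and $[\tau^s]_*\circ\mathrm{cores}=(\mathcal U_p')^s$, and invertibility of $(\mathcal U_p')^s$ on the $e'_{\ord}$-part finishes the job.
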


\begin{proof}
As explained in \cite[Corollary 2.7.5]{lrz}, this follows from the previous lemma. 
\end{proof}

\subsection{Algebraic representations} \label{repns}

Let $X^{\bullet}(S)$ denote the character lattice of $S$ and  $X^\bullet_+(S)$ be the set of dominant weights. For each $\lambda \in X^\bullet_+(S)$, there is a unique isomorphism class of irreducible representations of $(\rho_\lambda, V_\lambda)$ of $G$ (over $E$) of highest weight $\lambda$. A representative of this isomorphism class can be constructed using the Borel--Weil--Bott theorem, as the space of all polynomials 
$$
\{ f \in E[G]: f(\overline{n} \ell g) = \lambda(\ell) f(g) \hspace{2mm} \forall \overline{n} \in \overline{N}, \ell \in S, g \in G \},
$$
with $G$ acting by right translation. More concretely, each such $\lambda \in X^\bullet_+(S)$ can be identified with an integer tuple $(k_1,\ldots ,k_g, t_1, \ldots ,t_g)$ such that the associated $V_{\lambda}$ is given by the representation $\textrm{Sym}^{k_i} V \otimes \det^{t_i}$ at the $i$-th embedding, where $V$ is the standard representation of $G$.  All our Hecke operators defined above also act on cohomology with the $V_\lambda$'s as coefficient systems (see \cite[Definition 2.5.1]{lrz}).

\subsection{Modules of measures} \label{dual} 
In this subsection, for brevity,  we let $U$ to be $U_{r, p}$ for some $r \geq 0$. We also fix a character $\lambda = (k_1, \ldots ,k_g, t_1, \ldots t_g)$ such that $k_1+2t_1= \cdots = k_g+2t_g=0$.

\begin{lem}
We have that $\lambda$ is trivial on the subtorus $S^0$. 
\end{lem}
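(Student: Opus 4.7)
My plan is to prove this by direct computation after splitting $S$ over $E$, using the explicit description of $\lambda$ given at the start of Subsection \ref{repns}.

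After base-changing to $E$, the torus $S$ splits as a product $S \otimes_{\mathbb{Z}_p} E \cong \prod_{i=1}^{g} (\mathbb{G}_m \times \mathbb{G}_m)$, indexed by the embeddings $F \hookrightarrow E$ (equivalently, by the real embeddings $\sigma_1,\ldots,\sigma_g$). Under this identification, the tuple $(k_1,\ldots,k_g,t_1,\ldots,t_g)$ corresponding to $\lambda$ is precisely the highest weight of the algebraic representation which factor-wise is $\Sym^{k_i} V \otimes \det^{t_i}$, where $V$ is the standard representation of $\GL_2$. Since the highest weight vector of $\Sym^{k_i} V \otimes \det^{t_i}$ is $e_1^{k_i}$ up to scalars, the character $\lambda_i$ on the $i$-th diagonal torus sends $\begin{pmatrix} a & 0 \\ 0 & d \end{pmatrix} \mapsto a^{k_i+t_i} d^{t_i}$.

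The subtorus $S^0 \subseteq S$ is the diagonal image of $\Res_{\mathcal O_F \otimes \mathbb Z_p / \mathbb Z_p} \mathbb G_m$, i.e., its $E$-points correspond to tuples $(\operatorname{diag}(u_i,u_i))_{i=1}^{g}$ with $u_i \in E^\times$. Restricting $\lambda$ to $S^0$ and evaluating gives
\[
\lambda\bigl((\operatorname{diag}(u_i,u_i))_i\bigr) \;=\; \prod_{i=1}^{g} u_i^{k_i+t_i} \cdot u_i^{t_i} \;=\; \prod_{i=1}^{g} u_i^{k_i+2t_i},
\]
which is trivial by hypothesis since $k_i + 2t_i = 0$ for all $i$. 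Since $\lambda|_{S^0}$ vanishes on all $E$-points of the split torus $S^0 \otimes_{\mathbb{Z}_p} E$, it is trivial as a character of $S^0$, proving the claim. There is no significant obstacle here; the lemma is essentially an unpacking of the highest-weight conventions, and the only thing that needs to be checked is that the convention I used matches the one the author has in mind (which is the standard one signaled by the $\Sym^{k_i} V \otimes \det^{t_i}$ description).
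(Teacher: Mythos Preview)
Your proof is correct and follows essentially the same approach as the paper's own proof: both evaluate $\lambda$ on a scalar diagonal element and use the hypothesis $k_i + 2t_i = 0$ to conclude triviality. The paper's version is simply a one-line computation $\lambda\bigl(\operatorname{diag}(a,a)\bigr) = \prod_i \sigma_i(a)^{k_i+2t_i} = 1$, whereas you spell out the splitting over $E$ and the explicit form of the highest weight more carefully; but the substance is identical.
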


\begin{proof}
This follows immediately from our normalization of the weights described in Section \ref{repns}; namely, 
\begin{equation*}
\lambda \left ( \begin{pmatrix}
    a & 0 \\ 0 & a
    \end{pmatrix} \right ) = \sigma_1(a)^{k_1+2t_1} \cdots  \sigma_g^{k_g+2t_g}(a)=1. \qedhere
\end{equation*}
\qedhere
\end{proof}

\begin{rem}
The weights we have considered above are exactly those which are self-dual \textit{i.e} those for which the dual of $V_{\lambda}$ is isomorphic to itself. 
\end{rem}

Following \cite{lrz}, we set the following notation. 
We define $U$- modules of continuous functions 
$$
C_{\lambda, U}:=\{f: U \rightarrow \mathcal O: f \textrm{ continuous}, f( \ell n g)=\lambda^{-1}(\ell) f(g)  \hspace{2mm} \forall ln \in (S \cap U) N_0, g \in U \}
$$
and 
$$
C_{\textrm{univ}}:=\{f: U \rightarrow \mathcal O: f \textrm{ continuous}, f(\ell n  g)= f(g) \hspace{2mm} \forall ln \in Q^0(\mathbb Z_p), g \in U \},
$$
with $U$ acting by right translation. We endow these spaces with an action of $\tau$ given by 
$$
\tau \cdot f(n \ell \bar{n})= f(n \ell \tau^{-1} \bar{n} \tau). 
$$

We define modules of bounded distributions
$$
D_{\lambda, U}:=\textrm{Hom}_{\textrm{cts}}(C_{\lambda, U}, \mathcal O)
$$
and 
$$
D_{\textrm{univ}}:=\textrm{Hom}_{\textrm{cts}}(C_{\textrm{univ}}, \mathcal O)
$$
which inherit actions of $V_0'$ and $\tau^{-1}$ by duality. We let $\mathfrak S_U$ denote the image of $U$ in $\mathfrak S$ (with respect to the Iwahori decomposition stated in Section \ref{3.1}); note that $\mathfrak S_{U} = \mathfrak S \cong (\mathcal O_F \otimes \mathbb Z_p)^{\times}$ when $U=U_{0, p}$ and $ \mathfrak S_{U}$ is the set of elements of $\mathcal (\mathcal O_F \otimes \mathbb Z_p)^{\times}$ which are congruent to 1 mod $p^n$ when $U=U_{r, p}$ and $r \geq 1$. 
Similarly, we write $\bar{N_U}$ for $\bar{N_r}$ when $U=U_{r, p}$ for $r \geq 0$.  
We let $\mathcal O \llbracket \mathfrak S_{U} \rrbracket$ denote the Iwasawa algebra corresponding to $\mathfrak S_U$. We have that $D_{\lambda, U}$ and $D_{\textrm{univ}}$ are modules over $\Lambda=\mathcal O\llbracket \mathfrak S \rrbracket$ (with the action given by inverse translation) and this structure is given explicitly by the isomorphisms (see \cite[pp.6]{lrz})
$$
D_{\lambda, U} \cong (\mathcal O[-\lambda] \otimes_{\mathcal O }\Lambda) \hat{\otimes}_\mathcal O \mathcal O \llbracket \bar{N_U} \rrbracket 
$$
and 
$$
D_{\textrm{univ}} \cong  \Lambda \hat{\otimes}_\mathcal O \mathcal O\llbracket \bar{N_U} \rrbracket,
$$
where $\mathcal O[-\lambda]$ denotes $\mathcal O$ regarded as an $\mathfrak S$ (and hence $\Lambda$) module via the inverse of $\lambda$.  In particular, we have that 
$$
D_{\textrm{univ}} \hat{\otimes}_{\mathcal O \llbracket \mathfrak S_U \rrbracket } \mathcal O[-\lambda] \cong D_{\lambda, U}
$$
as $\Lambda$-modules. Since a power series ring in finitely many variables over a Noetherian ring is flat (see \cite[pp.146]{Bou06}), we have that $D_{\univ}$ is flat as a $\Lambda$-module.

\begin{prop} \label{flat}
The anti-ordinary projector $ e'_{\ord}$ acts on $\mathcal C^{\bullet}(U, D_{\univ})$ such that we have a decomposition
$$
C^{\bullet}(U, D_{\univ}) = e'_{\ord} C^{\bullet}(U, D_{\univ}) \oplus  (1-e'_{\ord}) C^{\bullet}(U, D_{\univ}). 
$$
with $\mathcal U_p'$ acting invertibly on the first component and  topologically nilpotently on the second.
Moreover, the complex $e'_{\ord} C^{\bullet}(U, D_{\univ})$ consists of flat $\Lambda$-modules. 
\end{prop}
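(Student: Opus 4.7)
My plan is to unwind the definition of the complex, identify each of its terms as a finite direct sum of copies of $D_{\univ}$, invoke Hida's classical argument to construct the idempotent, and then deduce flatness of the ordinary summand from the already-noted flatness of $D_{\univ}$ over $\Lambda$.

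First, I would observe that by the definition recalled in \eqref{p}, we have
$$
C^i(U, D_{\univ}) = \bigoplus_{j \in J} \mathrm{Hom}_{\overline{\Gamma_j}}\bigl(C_i(\overline{\Gamma_j}), D_{\univ}\bigr).
$$
Because each $C_i(\overline{\Gamma_j})$ is a finite free $\mathbb Z[\overline{\Gamma_j}]$-module, each Hom term identifies with a finite direct sum of copies of $D_{\univ}$. The $\Lambda$-action on $D_{\univ}$ (by left translation via $\mathfrak S$) commutes with the right $U$-action, and hence with each $\overline{\Gamma_j}$-action, so this identification respects the $\Lambda$-module structure. Combined with the flatness of $D_{\univ}$ over $\Lambda$ noted at the end of Section \ref{dual}, this already shows that each $C^i(U, D_{\univ})$ is flat over $\Lambda$.

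Second, I would construct the idempotent $e'_{\ord}$ by Hida's classical argument. The operator $\mathcal U_p'$ acts on the complex as in Section \ref{3.1}, and it commutes with the diamond action of $\Lambda$. The module $D_{\univ} \cong \Lambda \hat{\otimes}_{\mathcal O} \mathcal O\llbracket \bar N_U \rrbracket$ carries a natural profinite topology making $C^i(U, D_{\univ}) \cong D_{\univ}^{n_i}$ a compact topological $\mathcal O$-module, and the action of $\mathcal U_p'$ is sufficiently compact for the sequence $\bigl((\mathcal U_p')^{n!}\bigr)_n$ to converge in the endomorphism ring. Its limit $e'_{\ord}$ is an idempotent and $1 - e'_{\ord}$ is complementary; by construction $\mathcal U_p'$ acts invertibly on $e'_{\ord} C^i$ and topologically nilpotently on $(1-e'_{\ord}) C^i$. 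Since $\mathcal U_p'$ commutes with the simplicial differentials, the decomposition takes place at the level of complexes. The flatness assertion is then immediate: $e'_{\ord} C^i(U, D_{\univ})$ is a direct summand of the flat $\Lambda$-module $C^i(U, D_{\univ})$, hence flat.

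The main obstacle I anticipate is rigorously justifying the convergence of $(\mathcal U_p')^{n!}$ on the possibly infinite-rank $\Lambda$-module $C^i(U, D_{\univ})$. The cleanest path is to realize $D_{\univ}$ as the inverse limit of its quotients by a cofinal system of open $\mathcal U_p'$-stable $\Lambda$-submodules with finite-length cokernels over $\mathcal O$, apply the elementary Fitting decomposition at each finite layer (where convergence is automatic because the module is of finite length), and pass to the inverse limit; the required $\mathcal U_p'$-stability of the filtration follows from the commutation of $\mathcal U_p'$ with the $\Lambda$-action. This is precisely the strategy used in the analogous proposition in \cite{lrz}; because the argument is intrinsically local at $p$ and does not interact with the SV5 axiom in any way, it carries over to our setting without modification.
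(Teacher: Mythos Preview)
Your proposal is correct and is essentially the same as the paper's approach: the paper simply cites \cite[Proposition 2.7.2]{lrz}, and you have spelled out the content of that reference (finite-free description of the chain complex, Hida's idempotent argument via passage to finite quotients, and flatness of direct summands), together with the observation that the SV5 axiom plays no role here.
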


\begin{proof}
See  \cite[Proposition 2.7.2]{lrz}. 
\end{proof}

\subsection{Proof of the Tor spectral sequence}
In this subsection, we let $\lambda$ denote a weight that is self-dual in the sense of Section \ref{dual}. 
We let $s$ and $n$ be integers with $s \geq n$. Let $\Gamma_1=G(\mathbb Q)^+ \cap  U^{(p)} V_0^{ (s) }$ and $\Gamma_2= G(\mathbb Q)^+ \cap (  U^{(p)} (U_{n, p} \cap V_0^{ (s) }))$.  Let $\overline{\Gamma_1}$ and $\overline{\Gamma_2}$ denote the images of $\Gamma_1$ and $\Gamma_2$ in $G/Z_G(\mathbb Q)$. 
Write 
$$
Y_G( U^{ (p) } V_0^{(s)} )(\mathbb C) =  \bigsqcup_{j \in J_1} \Gamma_1 \backslash \mathcal H_F,
$$
and 
$$
Y_G( U^{ (p) } (V_0^{(s)} \cap U_{n, p}) )(\mathbb C) \cong  \bigsqcup_{j \in J_2} \Gamma_2 \backslash \mathcal H_F. 
$$
Each $i \in J_1$ corresponds to a matrix $g_i \in G(\mathbb A_f)$ whose determinants form a set of representatives for $F^{\times +}\backslash \mathbb A_{F, f}^{\times}/\det( U^{(p)} V_0^{(s)})$.  Similarly, each $i \in J_2$ corresponds to a matrix $g_i \in G(\mathbb A_f)$ whose determinants form a set of representatives for $F^{\times +}\backslash \mathbb A_{F, f}^{\times}/\det( U^{(p)} (U_{n, p} \cap V_0^{(s)}))$. 

Since $\det(U^{(p)}V_0^{(s)}))=(\mathcal O_F \otimes \hat{\mathbb Z})^{\times}$, we have that $|J_1|=h^+$, and by Lemmma \ref{calc}, $|J_2|=$  $h^+ \cdot |\mathcal O_F^{\times^+}\backslash (\mathcal O_F \otimes \mathbb Z_p)^{\times}  /\det(U_{n, p}) |$ (as written, Lemma \ref{calc} can only be applied when $s=n$ but the exact same proof goes through when $s \geq n$ since $\det(U_{n, p})=\det(U_{n, p} \cap V_0^{ (s) }) \textrm{ in } (\mathcal O_F \otimes \mathbb Z_p)^{\times} )$.  

\begin{prop} \label{cor}
Let $\varphi$ denote the natural map
$$
\varphi: \Gamma_1/\Gamma_2 \rightarrow V_0^{ (s) }/(U_{n, p} \cap V_0^{ (s) }) 
$$
Then $\im(\varphi)$ has index 
$ |\mathcal O_F^{\times+}\backslash (\mathcal O_F \otimes \mathbb Z_p)^{\times}  /\det(U_{n, p}) |
$
in  $V_0^{ (s) }/(U_{n, p} \cap V_0^{ (s) })$. 
\end{prop}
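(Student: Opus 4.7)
The plan is to identify both $V_0^{(s)}/(U_{n,p} \cap V_0^{(s)})$ and $(\mathcal O_F \otimes \mathbb Z_p)^\times/\det(U_{n,p})$ with explicit quotients of $(\mathcal O_F/p^n)^\times$, then compute the image of $\varphi$ using the determinant together with strong approximation for $\mathrm{SL}_2$. First I would check that, since $s \ge n$, the assignment $\begin{pmatrix} a & b \\ c & d \end{pmatrix} \mapsto a d^{-1} \bmod p^n$ defines a surjective group homomorphism $V_0^{(s)} \twoheadrightarrow (\mathcal O_F/p^n)^\times$ with kernel exactly $V_0^{(s)} \cap U_{n,p}$: the condition $c \equiv 0 \bmod p^s$ makes the off-diagonal contributions to matrix multiplication vanish modulo $p^n$, and any $u \in (\mathcal O_F/p^n)^\times$ is hit by $\mathrm{diag}(u,1)$. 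A direct inspection then shows $\det(U_{n,p})$ equals the preimage of $(\mathcal O_F/p^n)^{\times 2}$ in $(\mathcal O_F \otimes \mathbb Z_p)^\times$ --- one inclusion is forced by $\det \equiv x^2 \bmod p^n$, and the reverse is realised by $\mathrm{diag}(x, \alpha x^{-1})$ when $\alpha \equiv x^2 \bmod p^n$ --- so $(\mathcal O_F \otimes \mathbb Z_p)^\times/\det(U_{n,p}) \cong (\mathcal O_F/p^n)^\times/(\mathcal O_F/p^n)^{\times 2}$.

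Under these identifications, for $\gamma \in \Gamma_1$ with $p$-component $\begin{pmatrix} a & b \\ c & d \end{pmatrix}$ we have $\varphi(\gamma) \equiv \det(\gamma) \cdot d^{-2} \bmod p^n$, and composing $\varphi$ with the quotient by squares therefore recovers the map $\gamma \mapsto \det(\gamma)$. Since the local conditions at every finite place force $\det(\gamma) \in \mathcal O_F^{\times +}$, and every $u \in \mathcal O_F^{\times +}$ is realised as $\det(\mathrm{diag}(u,1))$ with $\mathrm{diag}(u,1) \in \Gamma_1$, this composite has image precisely $\mathcal O_F^{\times +} \cdot \det(U_{n,p})/\det(U_{n,p})$.

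The main obstacle is establishing the reverse containment $\im(\varphi) \supseteq (\mathcal O_F/p^n)^{\times 2}$. For this I would apply strong approximation for $\mathrm{SL}_2$ --- valid since $\mathrm{SL}_2(F_\infty)$ is non-compact --- as follows: given $t \in (\mathcal O_F \otimes \mathbb Z_p)^\times$, the subset of $\mathrm{SL}_2(\mathbb A_{F,f})$ consisting of elements lying in $U^{(p)} \cap \mathrm{SL}_2$ at every $v \ne p$ and in $V_0^{(s)} \cap \mathrm{SL}_2$ with bottom-right entry congruent to $t$ modulo $p^n$ at $p$ is non-empty (it contains $\mathrm{diag}(t^{-1}, t)$ at the $p$-component) and open, so by strong approximation it meets $\mathrm{SL}_2(F)$. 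Any such $\gamma$ automatically lies in $\Gamma_1$ and satisfies $\varphi(\gamma) \equiv t^{-2} \bmod p^n$. Combining with the previous paragraph yields $\im(\varphi) = \mathcal O_F^{\times +} \cdot (\mathcal O_F/p^n)^{\times 2}$ inside $(\mathcal O_F/p^n)^\times$, whose index equals
\[
\bigl| (\mathcal O_F/p^n)^\times \big/ \bigl(\mathcal O_F^{\times +} \cdot (\mathcal O_F/p^n)^{\times 2}\bigr) \bigr| = \bigl| \mathcal O_F^{\times +} \backslash (\mathcal O_F \otimes \mathbb Z_p)^\times / \det(U_{n,p}) \bigr|,
\]
as required.
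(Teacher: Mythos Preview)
Your proof is correct and follows essentially the same strategy as the paper, just packaged more explicitly. The paper compresses the argument into the assertion that the square
\[
\begin{tikzcd}
\Gamma_1/\Gamma_2 \arrow{r}{\varphi} \arrow[swap]{d}{\det} & V_0^{(s)}/(U_{n,p}\cap V_0^{(s)}) \arrow{d}{\det} \\
\mathcal O_F^{\times +}/(\mathcal O_F^{\times})^2 \arrow{r} & (\mathcal O_F \otimes \mathbb Z_p)^{\times}/\det(U_{n,p})
\end{tikzcd}
\]
is cartesian (by strong approximation for $\mathrm{SL}_2$), together with the observation that $\mathcal O_F^{\times +} \cap \det(U_{n,p}) = (\mathcal O_F^\times)^2$; your explicit identifications $V_0^{(s)}/(U_{n,p}\cap V_0^{(s)}) \cong (\mathcal O_F/p^n)^\times$ via $ad^{-1}$ and $(\mathcal O_F \otimes \mathbb Z_p)^\times/\det(U_{n,p}) \cong (\mathcal O_F/p^n)^\times/(\mathcal O_F/p^n)^{\times 2}$ simply make the right vertical map into the quotient-by-squares map, and your separate treatment of the square classes (via $\mathrm{SL}_2$-approximation) and the $\mathcal O_F^{\times +}$-contribution (via diagonal matrices) is exactly what ``cartesian'' encodes. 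The explicit version has the virtue of making every identification visible; the paper's version is terser but relies on the reader unpacking the cartesian claim.
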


\begin{proof}
By strong approximation for the semisimple group $\textrm{Res}_{F/\mathbb Q}\textrm{SL}_2$, the diagram 

\[ \begin{tikzcd}
\Gamma_1/\Gamma_2 \arrow{r}{} \arrow[swap]{d}{\det} & V_0^{ (s) }/(U_{n, p} \cap V_0^{ (s) })  \arrow{d}{\det} \\%
 \mathcal O_F^{\times +}/(\mathcal O_F^{\times})^2 \arrow{r}{} & (\mathcal O_F \otimes \mathbb Z_p)^{\times}/\det(U_{n, p})
\end{tikzcd}
\]

is cartesian (see \cite[Corollary 3.3]{Hid00} for a similar argument). On the other hand,  note that $\det(U_{n, p})=(\mathcal O_F^{\times})^2$ in $(\mathcal O_F \otimes \mathbb Z_p)^{\times}$ by Hensel's lemma; thus we conclude that  $$\displaystyle{ |\textrm{coker}(\varphi)|=  \frac{ [ (\mathcal O_F \otimes \mathbb Z_p)^{\times}: \det(U_{n, p} ) ]}{[\mathcal O_F^{\times +}: (\mathcal O_F^{\times})^2]}}$$
and that the natural map
$
(\mathcal O_F \otimes \mathbb Z_p)^{\times}/\det(U_{n, p}) \rightarrow  \mathcal O_F^{\times+}\backslash (\mathcal O_F \otimes \mathbb Z_p)^{\times}  /\det(U_{n, p})$
has kernel $\mathcal O_F^{\times +}/ (\mathcal O_F^{\times})^2$. 
Thus, 
$\displaystyle{
\frac{ [ (\mathcal O_F \otimes \mathbb Z_p)^{\times}: \det(U_{n, p} ) ]}{[\mathcal O_F^{\times +}: (\mathcal O_F^{\times})^2]}= |\mathcal O_F^{\times^+}\backslash (\mathcal O_F \otimes \mathbb Z_p)^{\times}  /\det(U_{n, p}) |
}$
proving the claim. 

\end{proof}

\begin{prop} \label{cor1} 
We have a Hecke-equivariant isomorphism 
$$
\bigoplus_{j \in J_1}  H^i(\overline{\Gamma_1},  \mathcal O/(p^s)[-\lambda][V_0^{(s)}/ V_0^{(s)} \cap U_{n, p} ]  ) 
    \cong \bigoplus_{j \in J_2}  H^i(\overline{\Gamma_1},  \mathcal O/(p^s)[-\lambda][\overline{\Gamma_1}/\overline{\Gamma_{2}}] ). 
$$
\end{prop}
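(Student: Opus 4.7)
The plan is to realize both sides of the claimed isomorphism as $h^{+} \cdot c$ identical copies of a single $\overline{\Gamma_1}$-cohomology group, where $c = |\mathcal O_F^{\times+}\backslash (\mathcal O_F \otimes \mathbb Z_p)^{\times}/\det(U_{n,p})|$ is the cokernel index from Proposition \ref{cor}. The key inputs are (i) the injectivity of $\varphi$, (ii) a decomposition of the coefficient system via right cosets of $\im(\varphi)$, and (iii) the component counts from Proposition \ref{calc}.

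First I would check that $\varphi : \Gamma_1/\Gamma_2 \to V_0^{(s)}/(V_0^{(s)} \cap U_{n,p})$ is injective. If $\gamma_1,\gamma_2 \in \Gamma_1 \subseteq U^{(p)} V_0^{(s)}$ have the same $\varphi$-image, then their $p$-components differ by an element of $V_0^{(s)} \cap U_{n,p}$, so $\gamma_1 \gamma_2^{-1}$ lies in $U^{(p)}(V_0^{(s)} \cap U_{n,p}) \cap G(\mathbb Q)^{+} = \Gamma_2$. Passing to $\overline{\Gamma_i}$ (the centers intersected with $\Gamma_1$ and $\Gamma_2$ agree), combined with Proposition \ref{cor}, this exhibits $\overline{\Gamma_1}/\overline{\Gamma_2}$ as an index-$c$ subgroup of $V_0^{(s)}/(V_0^{(s)} \cap U_{n,p})$ via $\varphi$.

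Second, I would unwind the $\overline{\Gamma_1}$-module structure. The action of $\overline{\Gamma_1}$ on $V_0^{(s)}/(V_0^{(s)} \cap U_{n,p})$ is by left multiplication through $\varphi$, and therefore factors through $\im(\varphi)$. Writing $V_0^{(s)}/(V_0^{(s)} \cap U_{n,p}) = \bigsqcup_{i=1}^{c} \im(\varphi) \cdot h_i$ as a disjoint union of right cosets, each $\im(\varphi) \cdot h_i$ is $\overline{\Gamma_1}$-stable, and the bijection $x \mapsto x h_i$ is $\overline{\Gamma_1}$-equivariant since $\gamma \cdot (x h_i) = (\varphi(\gamma) x) h_i$. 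Setting $M := \mathcal O/(p^s)[-\lambda]$, this yields a canonical $\overline{\Gamma_1}$-equivariant isomorphism
$$
M\bigl[V_0^{(s)}/(V_0^{(s)} \cap U_{n,p})\bigr] \;\cong\; \bigoplus_{i=1}^{c} M\bigl[\overline{\Gamma_1}/\overline{\Gamma_2}\bigr].
$$
Taking $H^{i}(\overline{\Gamma_1},-)$ and summing over the $|J_1| = h^{+}$ components on the left (noting $\det(U^{(p)}V_0^{(s)}) = (\mathcal O_F \otimes \widehat{\mathbb Z})^{\times}$), one obtains $h^{+} \cdot c$ copies of $H^{i}(\overline{\Gamma_1}, M[\overline{\Gamma_1}/\overline{\Gamma_2}])$; by Proposition \ref{calc} this exactly matches the $|J_2| = h^{+} \cdot c$ summands on the right.

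The main obstacle I anticipate is ensuring the resulting identification is Hecke-equivariant (not merely an abstract isomorphism of $\mathcal O$-modules): the groups $\overline{\Gamma_{i,j}}$ for varying $j \in J_2$ are not literally the same but differ by conjugation by the chosen representatives $g_j$, and the Hecke correspondences in Section \ref{hecke} shuffle components. To handle this one should choose the coset representatives $\{h_i\}$ compatibly with the $g_j$ adelic representatives, so that the module decomposition is matched coset-by-coset to the component decomposition on the right; equivariance for $\mathcal T_v$ and $\mathcal S_v^{\pm 1}$ at primes $v \notin \Sigma$ then follows since these operators act via natural correspondences that commute with the left multiplication by $\varphi(\gamma)$ used to define our decomposition.
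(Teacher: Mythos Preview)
Your proposal is correct and follows essentially the same route as the paper. The paper's proof is a single sentence invoking Proposition~\ref{cor} together with the component counts $|J_1|=h^+$ and $|J_2|=h^+\cdot c$; you have simply spelled out the mechanism behind that sentence, namely the injectivity of $\varphi$ (and its descent to $\overline{\Gamma_1}/\overline{\Gamma_2}$), the right-coset decomposition of $V_0^{(s)}/(V_0^{(s)}\cap U_{n,p})$ under $\im(\varphi)$, and the resulting splitting of the coefficient module into $c$ copies of $M[\overline{\Gamma_1}/\overline{\Gamma_2}]$. Your extra care with Hecke-equivariance (matching coset representatives $h_i$ with the adelic representatives $g_j$) goes beyond what the paper records, but is consistent with how the isomorphism is used downstream in Theorem~\ref{3.5}.
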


\begin{proof}
This follows from the previous proposition and the fact that $|J_1|=h^+ \text{ and } \newline 
|J_2|= h^+ \cdot |\mathcal O_F^{\times+}\backslash (\mathcal O_F \otimes \mathbb Z_p)^{\times}  /\det(U_{n, p}) |. 
$
\end{proof}

\begin{thm} \label{3.5}
We have an isomorphism of $\mathcal O$-modules 
$$
e'_{\ord}  H^i(Y_G(U_0), D_{\lambda, U_{n, p}})   \cong e'_{\ord} H^i(Y_G(U_n), V_{\lambda}). 
$$
\end{thm}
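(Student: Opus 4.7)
The plan is to work modulo $p^s$ for $s \geq n$ (the stated isomorphism then following by passage to the inverse limit, using the flatness statement of Proposition \ref{flat}), and to cut both sides down to a common combinatorial description at auxiliary levels, connected by Corollary \ref{cor1} and Shapiro's lemma. The strategy parallels \cite{lrz}.

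\textbf{Step 1 (Passage to auxiliary levels).} Proposition \ref{3.7} identifies the LHS modulo $p^s$ with
$$e'_{\ord} H^i\bigl(Y_G(U^{(p)} U_{0,p}^{(s)}),\, D_{\lambda, U_{n,p}}/p^s\bigr)$$
and the RHS modulo $p^s$ with
$$e'_{\ord} H^i\bigl(Y_G(U^{(p)} U_{n,p}^{(s)}),\, V_\lambda/p^s\bigr).$$

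\textbf{Step 2 (Rewriting the LHS auxiliary cohomology).} Using the Iwahori factorization of $U_{0,p}^{(s)}$ together with the presentation
$$D_{\lambda, U_{n,p}} \cong \mathcal{O}[-\lambda] \otimes_{\mathcal{O}} \Lambda\,\hat\otimes_{\mathcal{O}}\,\mathcal{O}[\![\overline{N}_{U_{n,p}}]\!],$$
and the Betti--to--group cohomology identification \eqref{eq}, one rewrites the LHS auxiliary cohomology as
$$\bigoplus_{j \in J_1} e'_{\ord} H^i\bigl(\overline{\Gamma_1},\, \mathcal{O}/p^s[-\lambda]\bigl[V_0^{(s)}/(V_0^{(s)} \cap U_{n,p})\bigr]\bigr),$$
the induced module on the coefficient side arising from the finite quotient $V_0^{(s)}/(V_0^{(s)} \cap U_{n,p})$ appearing in the reduction modulo $p^s$ of the $\overline{N}$-direction.

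\textbf{Step 3 (Rewriting the RHS auxiliary cohomology).} The filtration of $V_\lambda$ by $\overline{N}$-invariants has one-dimensional graded pieces on which the torus acts by various characters; only the piece giving the character $\mathcal{O}/p^s[-\lambda]$ survives the anti-ordinary projector $e'_{\ord}$, since on all other pieces $\mathcal{U}_p'$ acts topologically nilpotently modulo $p^s$. Together with \eqref{eq}, this yields
$$e'_{\ord} H^i\bigl(Y_G(U^{(p)} U_{n,p}^{(s)}),\, V_\lambda/p^s\bigr) \cong \bigoplus_{j \in J_2} e'_{\ord} H^i\bigl(\overline{\Gamma_2},\, \mathcal{O}/p^s[-\lambda]\bigr).$$

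\textbf{Step 4 (Matching the two sides).} Shapiro's lemma for the finite-index inclusion $\overline{\Gamma_2} \subseteq \overline{\Gamma_1}$ turns the expression in Step 3 into
$$\bigoplus_{j \in J_2} e'_{\ord} H^i\bigl(\overline{\Gamma_1},\, \mathcal{O}/p^s[-\lambda]\bigl[\overline{\Gamma_1}/\overline{\Gamma_2}\bigr]\bigr),$$
and Corollary \ref{cor1} identifies this with the expression in Step 2. This produces the desired isomorphism modulo $p^s$, which persists under the inverse limit over $s$.

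The main technical obstacle lies in Steps 2 and 3: rigorously establishing the two coefficient identifications (for $D_{\lambda, U_{n,p}}/p^s$ at Iwahori level and for $V_\lambda/p^s$ at deeper level) in a manner compatible with the action of $\mathbb{T}$ and the anti-ordinary projector. Each requires a careful bookkeeping of the Iwahori decomposition and of the behaviour of $\tau$-conjugation on the relevant modules.
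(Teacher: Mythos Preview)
Your approach is correct and essentially the same as the paper's: both reduce modulo $p^s$, pass to an auxiliary deep-Iwahori level via Proposition~\ref{3.7}, identify the coefficient modules (for $D_{\lambda, U_{n,p}}$ via the finite induced module, for $V_\lambda$ via the weight filtration), link the two sides by Corollary~\ref{cor1} and Shapiro's lemma, and then take the inverse limit over $s$. One small correction to your Step~2: the induced module $\mathcal{O}/p^s[-\lambda]\bigl[V_0^{(s)}/(V_0^{(s)}\cap U_{n,p})\bigr]$ arises from the \emph{torus} quotient $\mathfrak{S}/\mathfrak{S}_{U_{n,p}}$ (this is how the paper writes it), not from the $\overline{N}$-direction---it is precisely the $\overline{N}$-part of $D_{\lambda,U_{n,p}}$ that is collapsed by the anti-ordinary projector at deep level, leaving only the torus-induced piece.
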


\begin{proof}
As explained in \cite[Proposition 2.7.7]{lrz}, we have an isomorphism
$$
e'_{\textrm{ord}} H^i(Y_G(U_0), D_{\lambda, U_{n, p}}/p^s)  \cong e'_{\textrm{ord}} H^i(Y_G(U^{(p)} V_0^{ (s) }), \mathcal O/(p^s)[-\lambda] \otimes_{\mathcal O[\mathfrak S_{U_{n,p}}]} \mathcal O/(p^s)[\mathfrak S]). 
$$

Hence, 
\begin{align*}
 H^i(Y_G( U^{ (p) } V_0^{(s)} ), \mathcal O/(p^s)[-\lambda] \otimes_{\mathcal O[\mathfrak S_{U_{n, p}}]} \mathcal O(p^n)[\mathfrak S]) & \cong  \bigoplus_{j \in J_1}  H^i(\overline{\Gamma_1},  \mathcal O/(p^s)[-\lambda] \otimes_{\mathcal O[\mathfrak S_{U_{n,p}}]} \mathcal O/(p^s)[\mathfrak S]) \\
   &  \cong \bigoplus_{j \in J_1}  H^i(\overline{\Gamma_1},  \mathcal O/(p^s)[-\lambda][\mathfrak S/\mathfrak S_{U_{n,p}}]  )\\
   & \cong \bigoplus_{j \in J_1}  H^i(\overline{\Gamma_1},  \mathcal O/(p^s)[-\lambda][V_0^{(s)}/ V_0^{(s)} \cap U_{n, p} ]  ) \\
   & \cong \bigoplus_{j \in J_2}  H^i(\overline{\Gamma_1},  \mathcal O/(p^s)[-\lambda][\overline{\Gamma_1}/\overline{\Gamma_{2}}] ) \\
   & \cong \bigoplus_{j \in J_2}  H^i(\overline{\Gamma_2},  \mathcal O/(p^s)[-\lambda] ) \\
   & \cong H^i(Y_G(U^{(p)}( V_0^{ (s) } \cap U_{n, p})),  \mathcal O/(p^s)[-\lambda] ). 
\end{align*}

Here, the first isomorphism follows from Equation \eqref{eq}, the second is a general property of tensor products, the third follows the fact that $\mathfrak S/\mathfrak S_{U_{n, p}} \cong V_0^{(s)}/ V_0^{(s)} \cap U_{n, p}$, the fourth follows from Proposition \ref{cor1} , the fifth follows from Shapiro's Lemma and the sixth follows from Equation \eqref{eq} again. As explained in \cite[Proposition 2.7.7]{lrz}, we can use Proposition \ref{3.7} to conclude that $e'_{\textrm{ord}} H^i(Y_G(U^{(p)}( V_0^{ (s) } \cap U_{n, p})),  \mathcal O/(p^s)[-\lambda] )$ is in turn isomorphic to $e'_{\textrm{ord}}  H^i(Y_G(U_n),  V_\lambda/p^s)$ .  Thus, combining our isomorphims, we conclude that 
\begin{align*}
e'_{\textrm{ord}} H^i(Y_G(U_0), D_{\lambda, U_{n, p}}) &= \varprojlim_{s} e'_{\textrm{ord}} H^i(Y_G(U_0), D_{\lambda, U_{n, p}}/p^s)  \\ &\cong  \varprojlim_{s} e'_{\textrm{ord}}  H^i(Y_G(U_n),  V_\lambda/p^s) \\
&=e'_{\textrm{ord}} H^i(Y_G(U_n) , V_{\lambda}). 
\end{align*}
\end{proof}

\begin{rem}
The analogue of this theorem in \cite{lrz} (\cite[Proposition 2.7.7]{lrz}) crucially used the SV5 axiom for Shimura varieties in the application of Shapiro's lemma.  Nevertheless, as the proof given above demonstrates, with our choice of level groups, we do not need to invoke this axiom. 
\end{rem}

We note that Theorem \ref{3.5} is compatible with the comparision isomorphism between Betti and \'etale cohomology; for the rest of the paper, we work with \'etale cohomology of Hilbert modular varieties.

\begin{cor}
We have an isomorphism 
$$
e'_{\ord} H^i_{\textrm{\'et}}(Y_G(U_0), D_{\univ})  \cong e'_{\ord}  \varprojlim_{n} H^i_{\textrm{\'et}} (Y_G(U_n), \mathcal O)
$$
\end{cor}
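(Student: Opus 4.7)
The plan is to derive the corollary from Theorem~\ref{3.5} by specialising to the trivial weight $\lambda=0$ (which is self-dual in the sense of Section~\ref{dual}) and then taking an inverse limit over $n$. With $V_\lambda=\mathcal O$ and $\mathcal O[-\lambda]=\mathcal O$, Theorem~\ref{3.5} produces a compatible family of isomorphisms
$$
e'_{\ord} H^i_{\textrm{\'et}}(Y_G(U_0), D_{0, U_{n, p}}) \cong e'_{\ord} H^i_{\textrm{\'et}}(Y_G(U_n), \mathcal O),
$$
for every $n\ge 1$, whose transition maps on the right-hand side are the \'etale pushforward maps. It therefore suffices to establish a natural identification
$$
e'_{\ord} H^i_{\textrm{\'et}}(Y_G(U_0), D_{\univ}) \cong \varprojlim_{n} e'_{\ord} H^i_{\textrm{\'et}}(Y_G(U_0), D_{0, U_{n, p}}).
$$

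To identify the limit on the level of coefficients, I would invoke the base-change identity $D_{\univ}\hat{\otimes}_{\mathcal O\llbracket \mathfrak S_{U_{n,p}}\rrbracket}\mathcal O \cong D_{0,U_{n,p}}$ from Section~\ref{dual}, which presents $D_{0,U_{n,p}}$ as $D_{\univ}/J_n D_{\univ}$, where $J_n$ is the ideal of $\Lambda$ generated by the augmentation ideal of $\mathcal O\llbracket \mathfrak S_{U_{n,p}}\rrbracket$. The inclusions $\mathfrak S_{U_{n+1,p}}\subseteq \mathfrak S_{U_{n,p}}$ give $J_{n+1}\subseteq J_n$, so the induced transition maps $D_{0,U_{n+1,p}}\twoheadrightarrow D_{0,U_{n,p}}$ are surjective. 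Furthermore, $\{\mathfrak S_{U_{n,p}}\}_n$ is a fundamental system of open neighbourhoods of the identity in $\mathfrak S$, and the explicit description $D_{\univ}\cong \Lambda\hat{\otimes}_{\mathcal O}\mathcal O\llbracket \bar{N}_{U_0}\rrbracket$ shows that $D_{\univ}$ is complete with respect to the topology generated by the ideals $J_n$ together with the powers of $p$; consequently $D_{\univ}\cong \varprojlim_n D_{0,U_{n,p}}$ as topological $\Lambda$-modules.

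The main obstacle is exchanging this inverse limit with cohomology. I would work at the level of the complexes $\mathcal C^{\bullet}$ of Section~\ref{betti}: by Proposition~\ref{flat} combined with the flatness of $D_{\univ}$ over $\Lambda$, the tower $\{e'_{\ord}\mathcal C^{\bullet}(U_0, D_{0,U_{n,p}})\}_n$ consists of complexes of flat $\Lambda$-modules, and its transition maps are termwise surjective since they are induced from the surjections $D_{0,U_{n+1,p}}\twoheadrightarrow D_{0,U_{n,p}}$. The Mittag--Leffler condition therefore holds, $R^1\varprojlim$ vanishes at each degree, and the inverse limit commutes with cohomology. Combining this with the family of isomorphisms from Theorem~\ref{3.5}, and passing to \'etale cohomology via the comparison isomorphism noted after Theorem~\ref{3.5}, yields the corollary.
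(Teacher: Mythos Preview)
Your approach is correct and follows the same overall strategy as the paper: specialise Theorem~\ref{3.5} to the trivial weight and pass to the inverse limit. The only difference is in how the exchange of inverse limit and cohomology is justified. The paper works with the finer system $\{D_{0,U}/(p^s)\}$ indexed by both the level $U$ and the exponent $s$, so that each cohomology group in the tower is finite and the Mittag--Leffler condition is automatic; you instead keep integral coefficients $D_{0,U_{n,p}}$ and argue Mittag--Leffler via termwise surjectivity of the chain-level transition maps. Both arguments work, though the paper's is slightly more economical (and your invocation of flatness from Proposition~\ref{flat} is not actually needed for the Mittag--Leffler step---the surjectivity of $D_{0,U_{n+1,p}}\twoheadrightarrow D_{0,U_{n,p}}$ together with the fact that $e'_{\ord}$ is an idempotent compatible with the transition maps is what carries the weight).
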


\begin{proof}
Setting $\lambda$ to be the trivial character we deduce from the previous theorem that 
\begin{align*}
   e'_{\textrm{ord}} H^i_{\textrm{\'et}}(Y_G(U_0), D_{\textrm{univ}}) & \cong \varprojlim_{Q^0 \subseteq U, n}  e'_{\textrm{ord}} H^i_{\textrm{\'et}}(Y_G( U_0), D_{\lambda, U}/(p^n)) \\
   & \cong   \varprojlim_{n}  e'_{\textrm{ord}}  H^i_{\textrm{\'et}}(Y_G(U_n), \mathcal O/(p^n)) \\
   & \cong e'_{\textrm{ord}}  \varprojlim_{n} H^i_{\textrm{\'et}} (Y_G(U_n), \mathcal O). 
\end{align*}
Here the last isomorphism follows from the facts that our inverse system satisfies the Mittag-Leffler property because the cohomology groups in the system are finitely generated, and and that $e'_{\ord}$ commutes with the maps in the inverse limit. 
\end{proof}

Define $M^{\bullet}$ to be the image of $e'_{\textrm{ord}} C^{\bullet}(U_0, D_{\textrm{univ}})$ in the subcategory $D^{\textrm{flat}}(R)$ of the derived category of $R$-modules generated by flat objects.  We also set $\Lambda_n= \mathcal O \llbracket \mathfrak S_{U_{n, p}} \rrbracket$ for all $n \geq 1$.

\begin{thm}
For all $n \geq 1$, we have a quasi-isomorphism 
$$
M^{\bullet} \otimes^{\mathbb L}_ {\Lambda_n} \mathcal O[-\lambda] \cong e'_{\ord} C^{\bullet}(U_n, V_\lambda). 
$$
\end{thm}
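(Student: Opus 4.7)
Proof proposal.

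The plan is to unwind the derived tensor product, pass the tensor product inside the $C^\bullet(U_0,-)$ construction, apply the identification $D_{\univ} \otimes_{\Lambda_n} \mathcal O[-\lambda] \cong D_{\lambda, U_{n,p}}$ from Section \ref{dual}, and then invoke a chain-level version of Theorem \ref{3.5}.

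First I would reduce the derived tensor product to an ordinary one. Since $\mathfrak S/\mathfrak S_{U_{n,p}} \cong (\mathcal O_F/p^n\mathcal O_F)^\times$ is a finite group, the inclusion $\Lambda_n \hookrightarrow \Lambda$ realises $\Lambda$ as a finite free $\Lambda_n$-module; in particular, every $\Lambda$-flat module is also $\Lambda_n$-flat. By Proposition \ref{flat} each term of $e'_{\ord} C^\bullet(U_0, D_{\univ})$ is $\Lambda$-flat, hence $\Lambda_n$-flat, so that
$$
M^\bullet \otimes^{\mathbb L}_{\Lambda_n} \mathcal O[-\lambda] \cong e'_{\ord} C^\bullet(U_0, D_{\univ}) \otimes_{\Lambda_n} \mathcal O[-\lambda].
$$

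Next I would commute the tensor product with the cochain and projector constructions. By the description in Section \ref{betti}, each $C^q(U_0, M)$ is a finite direct sum of $\mathcal O$-modules of the form $\textrm{Hom}_{\overline{\Gamma_j}}(C^q(\overline{\Gamma_j}), M)$ with $C^q(\overline{\Gamma_j})$ finite free over $\mathbb Z[\overline{\Gamma_j}]$; hence $M \mapsto C^\bullet(U_0, M)$ is a $\Lambda_n$-linear functor commuting with colimits in $M$. Since $e'_{\ord}$ is an $\mathcal O$-linear idempotent commuting with the $\Lambda_n$-action on coefficients, we obtain
$$
e'_{\ord} C^\bullet(U_0, D_{\univ}) \otimes_{\Lambda_n} \mathcal O[-\lambda] \cong e'_{\ord} C^\bullet(U_0, D_{\univ} \otimes_{\Lambda_n} \mathcal O[-\lambda]).
$$
The identification $D_{\univ} \otimes_{\Lambda_n} \mathcal O[-\lambda] \cong D_{\lambda, U_{n,p}}$ of Section \ref{dual} (where the ordinary and completed tensor products coincide because $\mathcal O[-\lambda]$ is a cyclic $\Lambda_n$-module) then rewrites the right-hand side as $e'_{\ord} C^\bullet(U_0, D_{\lambda, U_{n,p}})$.

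Finally, I would identify $e'_{\ord} C^\bullet(U_0, D_{\lambda, U_{n,p}})$ with $e'_{\ord} C^\bullet(U_n, V_\lambda)$ in the derived category. On cohomology this is precisely Theorem \ref{3.5}. To upgrade to a quasi-isomorphism of complexes, I would revisit the chain of isomorphisms used in the proof of Theorem \ref{3.5}: the component decomposition \eqref{eq}, the Shapiro-lemma step, and the corestriction isomorphism of Proposition \ref{3.7} are all functorial at the level of complexes, and so assemble into a morphism in $D(\Lambda_n)$ that realises the cohomological isomorphism. The main obstacle will be passage to the inverse limit over $s$ at the chain level: one must verify that $R^1\varprojlim_s$ vanishes in the relevant degrees (equivalently, that Mittag--Leffler holds after applying $e'_{\ord}$), so that the limit of chain-level quasi-isomorphisms remains a quasi-isomorphism. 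The flatness provided by Proposition \ref{flat}, together with the finite generation of $e'_{\ord} H^i(Y_G(U_n), V_\lambda/p^s)$ over $\mathcal O/p^s$, is precisely what makes this last step go through.
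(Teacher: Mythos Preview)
Your proposal is correct and follows essentially the same approach as the paper: compute the derived tensor product using the flat representative $e'_{\ord} C^\bullet(U_0, D_{\univ})$, pass the tensor product inside to obtain $e'_{\ord} C^\bullet(U_0, D_{\lambda, U_{n,p}})$, and then invoke Theorem~\ref{3.5}. If anything, you are more careful than the paper on two points it leaves implicit---the passage from $\Lambda$-flatness to $\Lambda_n$-flatness, and the upgrade of Theorem~\ref{3.5} from a cohomological isomorphism to a quasi-isomorphism of complexes---so your concerns about Mittag--Leffler are well-placed but ultimately resolved exactly as you indicate.
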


\begin{proof}
Since $M^{\bullet}$ is represented by the flat complex $e'_{\textrm{ord}} \mathcal C^\bullet(U_0, D_{\textrm{univ}})$, we can compute the derived tensor product as
$$
e'_{\textrm{ord}} \mathcal C^\bullet (U_0, D_{\textrm{univ}}) \otimes_{\Lambda_n} \mathcal O[-\lambda] = e'_{\textrm{ord}} \mathcal C^\bullet(U_0, D_{\textrm{univ}} \otimes_{\Lambda_n} \mathcal O[-\lambda]). 
$$ 
By Theorem \ref{3.5}, this complex is isomorphic to $\ e'_{\textrm{ord}} C^{\bullet}(U_n, V_\lambda)$. 
\end{proof}

\begin{cor} \label{spectral1}
For all $n \geq 1$, there is a spectral sequence 
$$
E_2^{i, j}: \Tor_{-i}^{\Lambda_n}(e'_{\ord} \varprojlim_{s} H^j_{\textrm{\'et}}  (Y_G(U_s), \mathcal O) _{\overline{\mathbb Q}}, \mathcal O[-\lambda]) \implies e'_{\ord} H^{i+j}_{\textrm{\'et}} (Y_G(U_n)_{\overline{\mathbb Q}}, V_\lambda). 
$$
\end{cor}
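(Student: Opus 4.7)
The plan is to deduce this as a routine consequence of the preceding quasi-isomorphism
$$
M^{\bullet} \otimes^{\mathbb L}_ {\Lambda_n} \mathcal O[-\lambda] \cong e'_{\ord} C^{\bullet}(U_n, V_\lambda)
$$
together with the standard hyper-Tor spectral sequence. Recall that for any complex $C^\bullet$ of flat $\Lambda_n$-modules bounded appropriately and any $\Lambda_n$-module $N$, there is a convergent spectral sequence
$$
E_2^{i,j} = \Tor_{-i}^{\Lambda_n}\bigl(H^j(C^\bullet), N\bigr) \;\Longrightarrow\; H^{i+j}\bigl(C^\bullet \otimes_{\Lambda_n}^{\mathbb L} N\bigr).
$$
This is the classical hyperhomology spectral sequence, obtained from a flat (or projective) resolution $P^\bullet \to N$ by filtering the double complex $C^\bullet \otimes_{\Lambda_n} P^\bullet$ by columns and using the flatness of the $C^q$'s to compute the vertical cohomology.

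The first step is to apply this spectral sequence with $C^\bullet = e'_{\ord} \mathcal C^\bullet(U_0, D_{\univ})$ (which represents $M^\bullet$ by flat objects, thanks to Proposition \ref{flat}) and $N = \mathcal O[-\lambda]$. The right-hand side then computes $H^{i+j}$ of $M^\bullet \otimes^{\mathbb L}_{\Lambda_n} \mathcal O[-\lambda]$, which by the previous theorem is $e'_{\ord} H^{i+j}(C^\bullet(U_n, V_\lambda)) = e'_{\ord} H^{i+j}(Y_G(U_n), V_\lambda)$ in Betti cohomology, and hence in \'etale cohomology after base change to $\overline{\mathbb Q}$ via the comparison isomorphism noted just after Theorem \ref{3.5}.

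For the $E_2$-page, I would identify
$$
H^j(M^\bullet) \;=\; e'_{\ord} H^j(Y_G(U_0), D_{\univ}) \;\cong\; e'_{\ord} \varprojlim_s H^j_{\textrm{\'et}}(Y_G(U_s)_{\overline{\mathbb Q}}, \mathcal O),
$$
where the first equality is by definition of $M^\bullet$ and the isomorphism is the Corollary preceding the theorem (again combined with the Betti--\'etale comparison). Inserting this in place of $H^j(C^\bullet)$ yields precisely the stated $E_2$-term. Equivariance with respect to $G_{\mathbb Q,\Sigma}$ and the Hecke algebra $\mathbb T$ is automatic, because every map involved — the flat resolution, the quasi-isomorphism of the previous theorem, and the comparison isomorphism — is equivariant for these actions.

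The only real point requiring care is verifying the hypotheses under which the hyper-Tor spectral sequence converges: one needs $M^\bullet$ to be represented by a complex of flat $\Lambda_n$-modules (given by Proposition \ref{flat}, since $D_{\univ}$ is flat over $\Lambda$ and hence over $\Lambda_n$, and the ordinary projector is a direct summand) and one needs either boundedness or an appropriate finiteness condition to avoid convergence pathologies; this is guaranteed here because $C^\bullet(U_0, D_{\univ})$ is concentrated in degrees $0, \ldots, d$ where $d$ is the real dimension of the Borel--Serre compactification of $Y_G(U_0)$. I do not expect any genuine obstacle in this argument; it is essentially a formal unwinding of the preceding constructions.
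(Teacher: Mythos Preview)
Your argument is correct and follows exactly the same route as the paper, which simply says ``this follows from the previous theorem using the spectral sequence for the Tor functor.'' You have spelled out in detail what that sentence means---applying the hyper-Tor spectral sequence to the flat complex $e'_{\ord}\mathcal C^\bullet(U_0,D_{\univ})$ and identifying the $E_2$-page and abutment via the preceding theorem and corollary---so there is nothing to add.
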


\begin{proof}
This follows from the previous theorem using the spectral sequence for the Tor functor. 
\end{proof}

An analogous argument to the one given in this section allows us to obtain a similar result at the Iwahori level $U_0$.

\begin{cor} \label{spectral2}
There is a spectral sequence 
$$
E_2^{i, j}: \Tor_{-i}^{\Lambda}(e'_{\ord} \varprojlim_{s} H^j_{\textrm{\'et}}  (Y_G(U_s), \mathcal O)_{\overline{\mathbb Q}}, \mathcal O[-\lambda]) \implies e'_{\ord} H_{\textrm{\'et}} ^{i+j}(Y_G(U_0)_{\overline{\mathbb Q}}, V_\lambda). 
$$
\end{cor}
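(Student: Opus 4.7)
The plan is to follow the argument just used for Corollary \ref{spectral1}, replacing $\Lambda_n$ by $\Lambda$ and $U_n$ by $U_0$, and to observe that the chain of isomorphisms collapses considerably at the Iwahori level. The first step is to establish the Iwahori analogue of the theorem preceding Corollary \ref{spectral1}, namely a quasi-isomorphism
$$
M^{\bullet} \otimes^{\mathbb L}_{\Lambda} \mathcal O[-\lambda] \simeq e'_{\ord}\, C^{\bullet}(U_0, V_\lambda),
$$
from which the spectral sequence drops out of the standard hyper-Tor spectral sequence together with the corollary identifying $H^j(M^\bullet)$ with $e'_{\ord}\varprojlim_s H^j_{\textrm{\'et}}(Y_G(U_s)_{\overline{\mathbb Q}}, \mathcal O)$. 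By Proposition \ref{flat} the complex $M^\bullet$ is represented by flat $\Lambda$-modules, so the derived tensor product coincides with the naive one and the task reduces to proving that $e'_{\ord}\, C^{\bullet}(U_0, D_{\univ} \otimes_{\Lambda} \mathcal O[-\lambda]) \simeq e'_{\ord}\, C^{\bullet}(U_0, V_\lambda)$.

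From Section \ref{dual}, at $U = U_{0,p}$ we have $\mathfrak S_U = \mathfrak S$, so $D_{\univ} \otimes_{\Lambda} \mathcal O[-\lambda] \cong D_{\lambda, U_{0,p}}$. What is then needed is the Iwahori version of Theorem \ref{3.5}, namely
$$
e'_{\ord}\, H^i_{\textrm{\'et}}(Y_G(U_0), D_{\lambda, U_{0, p}}) \cong e'_{\ord}\, H^i_{\textrm{\'et}}(Y_G(U_0), V_\lambda).
$$
I would run the six-step chain of isomorphisms from the proof of Theorem \ref{3.5} verbatim, with the observation that at $n = 0$ most steps become trivial: since $V_0^{(s)} \subseteq U_{0, p}$ for all $s \geq 1$, we have $V_0^{(s)} \cap U_{0, p} = V_0^{(s)}$, the tensor factor $\mathcal O/(p^s)[-\lambda] \otimes_{\mathcal O[\mathfrak S_{U_{0, p}}]} \mathcal O/(p^s)[\mathfrak S]$ collapses to $\mathcal O/(p^s)[-\lambda]$, and the invocations of Proposition \ref{cor1} and Shapiro's lemma become identity maps. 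The only nontrivial ingredient is the corestriction isomorphism of Proposition \ref{3.7}, which is proved uniformly for all $n \geq 0$ and in particular applies here to pass from $Y_G(U^{(p)}V_0^{(s)})$ down to $Y_G(U_0)$. Passing to the inverse limit over $s$ via the Mittag--Leffler criterion (valid because the relevant cohomology groups are finitely generated, and because $e'_{\ord}$ commutes with the limit) then yields the claim.

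The potential subtlety, as throughout Section \ref{Section3}, is that the arguments imported from \cite{lrz} originally required SV5, which fails for $G = \Res_{F/\mathbb Q} \GL_2$. However, the two SV5-dependent inputs one might worry about --- the corestriction isomorphism of Proposition \ref{3.7} and the identification of Betti with group cohomology via Equation \eqref{eq} --- have already been established for our specific level groups in Sections \ref{Section2} and \ref{3.1}. I therefore expect no serious technical obstacle: if anything, the Iwahori case is cleaner than the $n \geq 1$ case treated in Corollary \ref{spectral1}, since the Shapiro's-lemma step and the component-counting argument of Proposition \ref{cor1} are unneeded.
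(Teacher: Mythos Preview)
Your proposal is correct and follows exactly the approach the paper intends: the paper itself only says ``an analogous argument to the one given in this section allows us to obtain a similar result at the Iwahori level $U_0$,'' and you have correctly spelled out that analogy, including the key simplification that $\mathfrak S_{U_{0,p}} = \mathfrak S$ and $V_0^{(s)} \cap U_{0,p} = V_0^{(s)}$, so that the Shapiro/component-counting steps of Theorem~\ref{3.5} become trivial and only the corestriction isomorphism of Proposition~\ref{3.7} (valid for $n=0$) is needed. If anything, you have been more explicit than the paper about why the Iwahori case is easier.
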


\section{Proof of the control theorem} \label{Section4}

In this section, we use the spectral sequences in Corollaries \ref{spectral1} and \ref{spectral2} to give a proof of our control theorem. We begin by recalling the results of Caraiani--Tamiozzo \cite{CT21}, which play a crucial role in proving our control theorem.

\subsection{The results of Caraiani--Tamiozzo} \label{caraianitamiozzo}
Let $K \subseteq G(\mathbb A_f)$ be a neat compact open subgroup and
 take a maximal ideal $\mathfrak m \subseteq \mathbb T$ in the support of $H^i(Y_G(K)_{\overline{\mathbb Q}}, \mathbb F_p)$. By the work of Scholze \cite{Sch15}, we have a unique continuous semisimple Galois representation
$$
\overline{\rho_{\mathfrak m}}: \Gal (\overline{F}/F) \rightarrow \GL_2(\overline{\mathbb F_p})
$$
such that $\overline{\rho_{\mathfrak m}}$ is unramified for all $v$ not in a finite set of suitable places of $F$ and such that the characteristic polynomial of $\overline{\rho_{\mathfrak m}}(\Frob_v)$ equals $X^2-\mathcal T_vX+\mathcal S_v N(v) \textrm{ mod } \mathfrak m$.

\begin{thm}[Caraiani--Tamiozzo] \label{ct}
For any $i \in \mathbb N_{\geq 1}$, let $\mathfrak m$ be a maximal ideal in the support of $H^i_{\textrm{\'et}}(Y_G(K)_{\overline{\mathbb Q}}, \mathbb F_p)$. Assume that the image of $\overline{\rho_\mathfrak m}$ is not solvable. Then $H^i_{\textrm{\'et}}(Y_G(K)_{\overline{\mathbb Q}}, \mathbb F_p)_{\mathfrak m}$ is non-zero only for $i=g$. 
\end{thm}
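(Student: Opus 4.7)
The plan is to follow the general strategy of Caraiani--Scholze on the vanishing of torsion in the cohomology of compact Shimura varieties, adapted to the non-compact Hilbert setting. The main new ingredients needed are a careful analysis of the Borel--Serre boundary, which does not appear in the compact case, together with the use of the Hodge--Tate period map from the perfectoid Hilbert modular variety as developed by Scholze \cite{Sch15}.

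First, I would reduce the problem to interior cohomology. The Borel--Serre compactification $\overline{Y_G(K)}$ has boundary strata indexed by the proper rational parabolic subgroups of $G$. Since $G = \Res_{F/\QQ} \GL_2$, the only proper rational parabolics are the Borel subgroups, whose Levi factors are tori. Consequently, the boundary cohomology is built from classes attached to Hecke characters of $F$, and the associated Galois pseudorepresentations factor through abelian (hence solvable) extensions. After localizing at the non-solvable maximal ideal $\mathfrak m$, these contributions vanish, and the long exact sequence relating the cohomology of $Y_G(K)$ to that of the boundary reduces the claim to the analogous statement for compactly supported cohomology; by Poincaré duality for interior cohomology, this pairs $H^i_!$ with $H^{2g-i}_!$, so it suffices to prove vanishing in a single range, say for $i < g$.

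Second, I would pass to the perfectoid Hilbert modular variety at infinite level at $p$ and exploit the Hodge--Tate period map
\[
\pi_{HT}: Y_G(Kp^{\infty})^{\mathrm{perf}} \to \mathscr F \ell_G \cong (\PP^1)^g.
\]
Since the target flag variety is $g$-dimensional and the fibers of $\pi_{HT}$ are affinoid perfectoid, the Artin-type vanishing of Scholze shows that the derived pushforward $R\pi_{HT,*}\mathbb F_p$ is concentrated in degrees $[0,g]$ on each Bruhat stratum. A Leray-type spectral sequence then computes the compactly supported cohomology of $Y_G(K)$ in terms of the cohomology of the Newton (equivalently, Goren--Oort or Bruhat) strata of $\mathscr F\ell_G$ with coefficients in this pushforward. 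On the open stratum, dimension considerations already restrict the contribution to $H^g$.

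Third --- and this is the heart of the argument --- I would show that on each non-open stratum the contribution to the $\mathfrak m$-localized cohomology vanishes. The strata are indexed by subsets $S \subseteq \{1,\ldots,g\}$ specifying the embeddings at which the corresponding $p$-divisible group is ordinary; on a non-open stratum the associated Igusa-type covers yield cohomology whose Hecke eigensystems give rise to Galois representations whose restriction to decomposition groups above $p$ is reducible or is induced from a proper subfield, constraints that are incompatible with the non-solvability of the image of $\overline{\rho_\mathfrak m}$. Making this step precise is the main obstacle: it requires a careful genericity argument combining local-global compatibility at primes above $p$ with a classification of the possible automorphic sources on smaller groups to rule out each non-generic stratum. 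Once this is in place, the Leray spectral sequence degenerates, forcing concentration of $H^*_{\text{\'et}}(Y_G(K)_{\overline \QQ}, \mathbb F_p)_{\mathfrak m}$ in degree $g$.
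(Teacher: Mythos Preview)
The paper does not prove this theorem at all: its entire proof is the single line ``See \cite[Theorem 7.1.1]{CT21}.'' In other words, the result is imported as a black box from Caraiani--Tamiozzo, and nothing further is required of you here.

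Your proposal, by contrast, is a sketch of how one might reprove \cite[Theorem 7.1.1]{CT21} itself. As such it is not comparable to the paper's argument, which is simply a citation. If your intent was only to justify invoking the result, a reference suffices. If you genuinely want to outline the Caraiani--Tamiozzo proof, be aware that your sketch follows the Caraiani--Scholze template (perfectoid infinite level, Hodge--Tate period map, Newton stratification of the flag variety) more closely than it does the actual method of \cite{CT21}. Caraiani--Tamiozzo instead work at an auxiliary prime $\ell$ that splits completely in $F$, use the Goren--Oort stratification of the special fiber, and exploit the description of the strata as $(\PP^1)^r$-bundles over Shimura varieties for inner forms of $G$ attached to quaternion algebras; the argument is then inductive on the stratification, with the non-solvability hypothesis used to rule out CM (and hence dihedral, hence solvable) contributions on the deeper strata. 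Your third step gestures at a genericity argument but does not identify this inductive mechanism, and the local reducibility/induction constraints you invoke at $p$ are not the actual obstruction used in \cite{CT21}. So while the broad architecture (reduce to interior cohomology, stratify, kill non-generic strata) is in the right spirit, the specific geometry is different, and the crucial third step as you have written it is too vague to count as a proof.
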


\begin{proof}
See \cite[Theorem 7.1.1]{CT21}. 
\end{proof}

\begin{cor} \label{modpvanishing}
In the above setting, we have $H^i_{\textrm{\'et}}(Y_G(K)_{\overline{\mathbb Q}}, \mathbb Z_p)_{\mathfrak m} \neq 0$ only for $i=g$. Moreover, $H^g_{\textrm{\'et}}(Y_G(K)_{\overline{\mathbb Q}}, \mathbb Z_p)_{\mathfrak m}$ is free as a $\mathbb Z_p$-module. 
\end{cor}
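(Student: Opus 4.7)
The plan is to deduce the $\mathbb Z_p$-statement from the mod $p$ statement (Theorem \ref{ct}) via the long exact sequence of étale cohomology associated to the short exact sequence of sheaves
\[
0 \to \mathbb Z_p \xrightarrow{\cdot p} \mathbb Z_p \to \mathbb F_p \to 0.
\]
For each $i$ this yields the standard short exact sequence of Hecke-modules
\[
0 \to H^i_{\mathrm{\acute{e}t}}(Y_G(K)_{\overline{\mathbb Q}}, \mathbb Z_p)/p \to H^i_{\mathrm{\acute{e}t}}(Y_G(K)_{\overline{\mathbb Q}}, \mathbb F_p) \to H^{i+1}_{\mathrm{\acute{e}t}}(Y_G(K)_{\overline{\mathbb Q}}, \mathbb Z_p)[p] \to 0,
\]
which remains exact after localising at the maximal ideal $\mathfrak m$ since localisation is exact.

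By Theorem \ref{ct}, the middle term vanishes for every $i \neq g$. Reading off the two ends of the sequence gives, for all $i \neq g$,
\[
H^i_{\mathrm{\acute{e}t}}(Y_G(K)_{\overline{\mathbb Q}}, \mathbb Z_p)_{\mathfrak m}/p = 0 \quad \text{and} \quad H^{i+1}_{\mathrm{\acute{e}t}}(Y_G(K)_{\overline{\mathbb Q}}, \mathbb Z_p)_{\mathfrak m}[p] = 0.
\]
Taking $i = g-1$ in the second of these (valid since $g \geq 1$) shows that $H^g_{\mathrm{\acute{e}t}}(Y_G(K)_{\overline{\mathbb Q}}, \mathbb Z_p)_{\mathfrak m}$ has no $p$-torsion; since it is finitely generated over $\mathbb Z_p$ (the cohomology of the Borel--Serre compactification is finitely generated, and the localisation at $\mathfrak m$ is a direct summand), it is a free $\mathbb Z_p$-module. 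This gives the ``moreover'' clause.

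For $i \neq g$, I would combine the vanishing $H^i(\mathbb Z_p)_\mathfrak m / p = 0$ with topological Nakayama: $H^i_{\mathrm{\acute{e}t}}(Y_G(K)_{\overline{\mathbb Q}}, \mathbb Z_p)_\mathfrak m$ is a finitely generated $\mathbb Z_p$-module, and a finitely generated $\mathbb Z_p$-module which equals its own image after dividing by $p$ must be zero. Hence $H^i_{\mathrm{\acute{e}t}}(Y_G(K)_{\overline{\mathbb Q}}, \mathbb Z_p)_\mathfrak m = 0$ for $i \neq g$. The only mildly delicate point is the finite generation over $\mathbb Z_p$, which I would justify by comparing to Betti cohomology of $Y_G(K)$ (finitely generated because $Y_G(K)$ is homotopy equivalent to a finite CW complex, namely the Borel--Serre compactification); alternatively, this follows from the comparison with the étale cohomology of the compactly-supported version together with standard finiteness theorems. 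No real obstacle arises; the argument is purely formal once Theorem \ref{ct} is in hand.
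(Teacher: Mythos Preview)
Your proposal is correct and is essentially the same argument as the paper's: both use the long exact sequence coming from $0 \to \mathbb Z_p \xrightarrow{\cdot p} \mathbb Z_p \to \mathbb F_p \to 0$, localise at $\mathfrak m$, invoke Theorem~\ref{ct}, and then apply Nakayama plus finite generation over $\mathbb Z_p$. The only difference is presentational---you package the long exact sequence as the short exact sequence $0 \to H^i/p \to H^i(\mathbb F_p) \to H^{i+1}[p] \to 0$, whereas the paper phrases the same information as surjectivity (resp.\ injectivity) of multiplication by $p$ on $H^i_{\mathfrak m}$ for $i\neq g$ (resp.\ $i=g$).
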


\begin{proof}
This is explained in \cite[Corollary 7.1.2]{CT21}; we give the proof below for the convenience of the reader. We consider the short exact sequence
$$
0 \rightarrow \mathbb Z_p \xrightarrow{\cdot p} \mathbb Z_p \rightarrow \mathbb F_p  \rightarrow 0,  
$$
localize the corresponding long exact sequence at $\mathfrak m$ and employ Theorem \ref{ct} to obtain a surjective map 
$$
H^i_{\textrm{\'et}}(Y_G(K)_{\overline{\mathbb Q}}, \mathbb Z_p)_{\mathfrak m} \xrightarrow{\cdot p} H^i_{\textrm{\'et}}(Y_G(K)_{\overline{\mathbb Q}}, \mathbb Z_p)_{\mathfrak m}
$$
for all $i \neq g$.  Since $H^i_{\textrm{\'et}}(Y_G(K)_{\overline{\mathbb Q}}, \mathbb Z_p)_{\mathfrak m}$ is a finitely generated $\mathbb Z_p$-module, we can apply Nakayama's lemma to conclude that $H^i_{\textrm{\'et}}(Y_G(K)_{\overline{\mathbb Q}}, \mathbb Z_p)_{\mathfrak m}=0$ for all $i \neq g$. When $i=g$, we get that 
$$
\frac{H^g_{\textrm{\'et}}(Y_G(K)_{\overline{\mathbb Q}}, \mathbb F_p)_{\mathfrak m}}{\textrm{im}(H^g_{\textrm{\'et}}(Y_G(K)_{\overline{\mathbb Q}}, \mathbb Z_p)_{\mathfrak m} \rightarrow H^g_{\textrm{\'et}}(Y_G(K)_{\overline{\mathbb Q}}, \mathbb F_p)_{\mathfrak m}}=0.
$$
Since $H^g_{\textrm{\'et}}(Y_G(K)_{\overline{\mathbb Q}}, \mathbb F_p)_{\mathfrak m}  \neq 0$, it follows that $H^g_{\textrm{\'et}}(Y_G(K)_{\overline{\mathbb Q}}, \mathbb Z_p)_{\mathfrak m} \neq 0$ as well.  Finally, note that the long-exact sequence also yields an injection $H^g_{\textrm{\'et}}(Y_G(K)_{\overline{\mathbb Q}}, \mathbb Z_p)_\mathfrak m \xhookrightarrow{\cdot p} H^g_{\textrm{\'et}}(Y_G(K)_{\overline{\mathbb Q}},  \mathbb Z_p)_\mathfrak m$. Hence, 
$H^g_{\textrm{\'et}}(Y_G(K)_{\overline{\mathbb Q}}, \mathbb Z_p)_\mathfrak m$ has no $p$-torsion, hence is torsion-free and hence free (using the fact that torsion-free modules over a PID are free). 
\end{proof}

\subsection{Proof of the control theorem}

We begin by recalling some notation and assumptions.  We let $H^g_{\textrm{Iw}}(Y_G(U_\infty)_{\overline{\mathbb Q}}, \mathcal O)= \varprojlim_{n} H^g_{\textrm{\'et}} (Y_G(U_n)_{\overline{\mathbb Q}}, \mathcal O)$, $\Lambda=\llbracket(\mathcal O_F \otimes \mathbb Z_p)^{\times}\rrbracket$ and $\Lambda_n= \mathcal O \llbracket \mathfrak S_{{U_{n, p}}} \rrbracket $. We recall that $\Lambda_n$ is just the Iwasawa algebra corresponding to elements of $\mathcal (\mathcal O_F \otimes \mathbb Z_p)^{\times}$ which are congruent to 1 mod $p^n$.
Let $\mathfrak m$ be a maximal ideal in the support of $H^g_{\textrm{\'et}}(Y_G(U_n)_{\overline {\mathbb Q}} , \mathbb F_p)$ such that the image of the associated Galois representation $\overline{\rho}_\mathfrak m$ is not solvable.  Let $\lambda$ denote a weight of $G$ that is self-dual in the sense of Section \ref{dual}. Let $\mathcal O[-\lambda]$ denote $\mathcal O$ with $\Lambda$ acting via the inverse of the character $\lambda$.  The first step in the proof of the control theorem is to analyze the $E_2^{i, j}$ term in Corollary \ref{spectral1}. To do so, we first recall the following result from commutative algebra. 

\begin{prop} \label{koszul}
Let $R$ be a commutative ring, let $x_1, \ldots, x_N$ be elements of $R$ and let $R^N$ be the free $R$-module of rank $N$ with basis $\{e_i : 1 \leq i \leq N \}$. Consider the Koszul complex $K_{\bullet}(x_1, \ldots, x_N)$  associated to $x_1, \ldots, x_N$ given by 
$$
0 \rightarrow \bigwedge^{N} R^{N} \xrightarrow{d} \bigwedge^{N-1} R^{N} \xrightarrow{d} \cdots \xrightarrow{d} \bigwedge^{1} R^N \xrightarrow{d} R \rightarrow 0, 
$$
where $$
d(e_{j_1} \wedge \cdots \wedge e_{j_i} )= \sum_{s=1}^{i} (-1)^{s-1} x_{j_s} e_{j_1} \wedge \cdots \wedge \widehat{e_{j_s}} \wedge \cdots \wedge e_{j_i}.  
$$
If $x_1, \ldots ,x_N$ form a regular sequence in $R$, then $K_{\bullet}(x_1, \ldots, x_N)$ is a free-resolution of $R/(x_1, \ldots, x_N)$ as an $R$-module. 
\end{prop}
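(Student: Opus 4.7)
The plan is to proceed by induction on $N$. In the base case $N = 1$, the Koszul complex is simply $0 \to R \xrightarrow{x_1} R \to 0$; the hypothesis that $x_1$ is a non-zerodivisor immediately gives $H_1 = 0$ and $H_0 = R/(x_1)$, yielding a free resolution of $R/(x_1)$.

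For the inductive step, I would use the standard identification of $K_\bullet(x_1, \ldots, x_N)$ with the mapping cone of the chain map on $K_\bullet(x_1, \ldots, x_{N-1})$ given by multiplication by $x_N$, or equivalently with the total complex of $K_\bullet(x_1, \ldots, x_{N-1}) \otimes_R K_\bullet(x_N)$. Abbreviating $K := K_\bullet(x_1, \ldots, x_{N-1})$ and $K' := K_\bullet(x_1, \ldots, x_N)$, the associated short exact sequence of complexes produces a long exact sequence in homology whose connecting map is (up to sign) multiplication by $x_N$:
\[
\cdots \to H_i(K) \xrightarrow{x_N} H_i(K) \to H_i(K') \to H_{i-1}(K) \xrightarrow{x_N} H_{i-1}(K) \to \cdots.
\]
By the inductive hypothesis, $H_i(K) = 0$ for $i \geq 1$ and $H_0(K) = R/(x_1, \ldots, x_{N-1})$. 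This immediately forces $H_i(K') = 0$ for $i \geq 2$. The vanishing of $H_1(K')$ reduces to the injectivity of multiplication by $x_N$ on $R/(x_1, \ldots, x_{N-1})$, which is precisely the regular sequence condition. Finally, $H_0(K')$ is the cokernel of this multiplication, namely $R/(x_1, \ldots, x_N)$. Since each $\bigwedge^i R^N$ is a free $R$-module of rank $\binom{N}{i}$, we obtain the required free resolution.

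The only place where the regularity hypothesis enters non-trivially is in the injectivity step in the inductive argument; the identification of $K_\bullet(x_1, \ldots, x_N)$ as the stated mapping cone is a routine check, obtained by writing out the Koszul differential and matching it against the cone construction (modulo some sign bookkeeping). Since this is a classical result in commutative algebra (see, e.g., Matsumura, \emph{Commutative Ring Theory}, Theorem 16.5, or Eisenbud, \emph{Commutative Algebra}, Corollary 17.5), in a published paper one would typically state it with a reference rather than reproduce the induction, as is done here.
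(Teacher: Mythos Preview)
Your proposal is correct: the inductive argument via the mapping-cone description of the Koszul complex is the standard proof, and you have identified the exact point where regularity is used. The paper itself does not reproduce any argument but simply cites \cite[Corollary 4.5.5]{Wei94}, precisely as you anticipated in your final paragraph.
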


\begin{proof}
See for instance \cite[Corollary 4.5.5]{Wei94}. 
\end{proof}

\begin{prop} \label{tor}
We have that the $E_{2}^{i,j}$ term in Corollary \ref{spectral1} is zero unless $i \in \{0,-1,\ldots ,-g\}$.
\end{prop}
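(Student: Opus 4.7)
The plan is to reduce the vanishing statement to a standard Koszul resolution argument by identifying the precise structure of the ring $\Lambda_n$ and of the kernel of the augmentation-like map $\Lambda_n \to \mathcal O$ encoded by $\lambda$.

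First, I would unwind what $\mathfrak S_{U_{n,p}}$ looks like for $n \geq 1$. Since $p$ is odd and unramified in $F$, the $\ZZ_p$-algebra $\mathcal O_F \otimes \ZZ_p$ is a finite \'etale product of unramified local rings of total rank $g$, and the subgroup of $(\mathcal O_F \otimes \ZZ_p)^{\times}$ consisting of elements congruent to $1$ mod $p^n$ is a torsion-free pro-$p$ group which, via the $p$-adic logarithm, is topologically isomorphic to $\ZZ_p^{g}$. Consequently $\Lambda_n = \mathcal O\llbracket \mathfrak S_{U_{n,p}} \rrbracket$ is (non-canonically) isomorphic to a power series ring $\mathcal O\llbracket T_1, \ldots, T_g \rrbracket$ in $g$ variables, where $T_i = [\gamma_i]-1$ for a choice of topological generators $\gamma_1, \ldots, \gamma_g$ of $\mathfrak S_{U_{n,p}}$. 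In particular $\Lambda_n$ is a regular local ring of Krull dimension $g+1$.

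Next, I would observe that the self-dual character $\lambda$, viewed via Section \ref{dual} as a continuous character of $\mathfrak S$, extends to a continuous $\mathcal O$-algebra homomorphism $\Lambda_n \to \mathcal O$ whose kernel is the ideal generated by the $g$ elements $y_i := [\gamma_i] - \lambda(\gamma_i)$. Since $\Lambda_n$ is a regular local ring and the quotient $\Lambda_n/(y_1, \ldots, y_g) \cong \mathcal O$ is itself regular, a standard argument (successive quotients remain regular of the expected dimension) shows that $y_1, \ldots, y_g$ form a regular sequence in $\Lambda_n$.

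Applying Proposition \ref{koszul} to this regular sequence, the Koszul complex $K_{\bullet}(y_1, \ldots, y_g)$ is a free resolution of $\mathcal O[-\lambda]$ as a $\Lambda_n$-module of length exactly $g$. Therefore, for any $\Lambda_n$-module $M$, one has $\Tor_i^{\Lambda_n}(M, \mathcal O[-\lambda]) = 0$ whenever $i > g$. Taking $M = e'_{\ord} \varprojlim_{s} H^j_{\et}(Y_G(U_s)_{\overline{\QQ}}, \mathcal O)$ immediately yields that the $E_2^{i,j}$ term of Corollary \ref{spectral1} vanishes unless $-i \in \{0, 1, \ldots, g\}$, i.e., unless $i \in \{0, -1, \ldots, -g\}$, as required. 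The only subtle step is the verification that the $y_i$ form a regular sequence, but beyond that the argument is entirely formal.
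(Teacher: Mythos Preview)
Your proof is correct and follows essentially the same approach as the paper: identify $\Lambda_n$ with a power series ring $\mathcal O\llbracket T_1,\ldots,T_g\rrbracket$, exhibit $\mathcal O[-\lambda]$ as the quotient by a length-$g$ regular sequence, and apply the Koszul resolution of Proposition~\ref{koszul} to bound the projective dimension. One cosmetic point: since the paper's convention is that $\mathfrak S$ acts on $\mathcal O[-\lambda]$ via $\lambda^{-1}$, your generators of the kernel should be $[\gamma_i]-\lambda^{-1}(\gamma_i)$ rather than $[\gamma_i]-\lambda(\gamma_i)$, though this does not affect the argument.
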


\begin{proof}
Since $E_2^{i, j}= \Tor_{-i}^{\Lambda_n}(e'_{\ord} \varprojlim_{s} H^j_{\textrm{\'et}}  (Y_G(U_s)_{\overline{\mathbb Q}}, \mathcal O), \mathcal O[-\lambda])$,  it suffices to construct a free resolution of $\mathcal O[-\lambda]$ as a $\Lambda_n$-module of length $g$. On the other hand, the Iwasawa algebra $\Lambda_n$ can be identified with products of copies of $\mathcal O[[T_1, \ldots, T_g]]$; it thus suffices to construct a free resolution of $\mathcal O[-\lambda]$ as an $\mathcal O[[T_1, \ldots, T_g]]$ module of length $g$. To do this, we note that the sequence $T_1-\lambda^{-1}(T_1), \ldots, T_g-\lambda^{-1}(T_g)$ is a regular sequence in $ O[[T_1, \ldots, T_g]]$ and that $$ O[[T_1, \ldots, T_g]]/(T_1-\lambda^{-1}(T_1), \ldots, T_g-\lambda^{-1}(T_g)) \cong \mathcal O[-\lambda]$$ as $\mathcal O[[T_1, \ldots, T_g]]$-modules.  We can thus apply Proposition \ref{koszul} to deduce that the Koszul complex associated to $T_1-\lambda^{-1}(T_1), \ldots, T_g-\lambda^{-1}(T_g)$ provides a free resolution of $\mathcal O[-\lambda]$ as an $\mathcal O[[T_1, \ldots, T_g]]$ module of length $g$. 
\end{proof}

\begin{rem} \label{oo}
 By using a similar argument as above, we can also deduce that $E_{2}^{i,j}$ term in Corollary \ref{spectral2} is zero unless $i \in \{0,-1,\ldots ,-g\}$.
\end{rem}

\begin{prop} \label{better} 
The following hold: 
\vspace{2mm}
\\
(a)  For all $n \geq 1$, we have that 
$$
\Tor^{\Lambda_n}_{-i}(e'_{\ord}H^g_{\Iw}(Y_G(U_\infty)_{\overline{\mathbb Q}}, \mathcal O )_\mathfrak m, \mathcal O[-\lambda])= H^{i+g}_{\textrm{\'et}} (Y_G(U_n)_{\overline{\mathbb Q}}, V_\lambda)_{\mathfrak m}. 
$$

(b) When $n=0$,  we have that 
$$\Tor^{\Lambda}_{-i}(e'_{\ord}H^g_{\Iw}(Y_G(U_\infty)_{\overline{\mathbb Q}}, \mathcal O )_\mathfrak m, \mathcal O[-\lambda])= H^{i+g}_{\textrm{\'et}} (Y_G(U_0)_{\overline{\mathbb Q}}, V_\lambda)_{\mathfrak m}. $$ 
\end{prop}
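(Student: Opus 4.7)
The plan is to pass to the complex level, use Caraiani--Tamiozzo to identify the relevant complex (after localization) with a single free module placed in middle degree, and then read off the $\Tor$ formula.

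First, I would work directly with the complex $M^\bullet := e'_{\ord} \mathcal C^\bullet(U_0, D_{\univ})$, which by Proposition \ref{flat} is a bounded complex of flat $\Lambda$-modules. Since $\Lambda$ is a free module of finite rank over $\Lambda_n$ (decomposing according to cosets of the finite quotient $\mathfrak S / \mathfrak S_{U_{n,p}}$), restriction of scalars makes $M^\bullet$ a bounded complex of flat $\Lambda_n$-modules as well. Localization at $\mathfrak m$ preserves these properties and, being exact, commutes with cohomology, so $H^j(M^\bullet_\mathfrak m) = (e'_{\ord} H^j_{\Iw}(Y_G(U_\infty)_{\overline{\mathbb Q}}, \mathcal O))_\mathfrak m$ without any need to interchange inverse limits with $\mathfrak m$-localization. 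The localized form of Theorem \ref{3.5} also yields a quasi-isomorphism $M^\bullet_\mathfrak m \otimes^{\mathbb L}_{\Lambda_n} \mathcal O[-\lambda] \simeq (e'_{\ord} \mathcal C^\bullet(U_n, V_\lambda))_\mathfrak m$, whose cohomology computes $(e'_{\ord} H^*_{\textrm{\'et}}(Y_G(U_n)_{\overline{\mathbb Q}}, V_\lambda))_\mathfrak m$.

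The crux is then to show that $M^\bullet_\mathfrak m$ is quasi-isomorphic to a finitely generated free $\Lambda_n$-module $L$ placed in degree $g$. This uses the following minimal-free-resolution fact: for a regular local Noetherian ring $R$ with residue field $k$, a perfect complex whose derived fibre at $k$ has cohomology concentrated in a single degree $d$ is itself quasi-isomorphic to a finitely generated free $R$-module placed in degree $d$ (proof: pass to the minimal free model, where all differentials vanish modulo the maximal ideal, then invoke Nakayama). To verify the residual-fibre hypothesis for $R = \Lambda_n$, which is the power series ring $\mathcal O \llbracket y_1, \ldots, y_g \rrbracket$ and hence regular local of dimension $g+1$, first specialize via the trivial weight: Corollary \ref{modpvanishing} (Caraiani--Tamiozzo) combined with the localized Theorem \ref{3.5} shows that $H^*(M^\bullet_\mathfrak m \otimes^{\mathbb L}_{\Lambda_n} \mathcal O) = (e'_{\ord} H^*(Y_G(U_n), \mathcal O))_\mathfrak m$ is concentrated in degree $g$ and $\mathcal O$-free; further reducing modulo the uniformizer of $\mathcal O$ (one of the generators of the maximal ideal of $\Lambda_n$) preserves the concentration, so $M^\bullet_\mathfrak m \otimes^{\mathbb L}_{\Lambda_n} k$ has cohomology only in degree $g$.

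Granted this quasi-isomorphism, the derived tensor product $M^\bullet_\mathfrak m \otimes^{\mathbb L}_{\Lambda_n} \mathcal O[-\lambda]$ becomes quasi-isomorphic to $L \otimes^{\mathbb L}_{\Lambda_n} \mathcal O[-\lambda]$ placed in degree $g$, whose cohomology in degree $i+g$ is $\Tor^{\Lambda_n}_{-i}(L, \mathcal O[-\lambda])$; this matches the left-hand side of the Proposition upon identifying $L = (e'_{\ord} H^g_{\Iw}(Y_G(U_\infty)_{\overline{\mathbb Q}}, \mathcal O))_\mathfrak m$, while reading the cohomology of the same object via Theorem \ref{3.5} matches the right-hand side. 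Part (b) goes through verbatim with $\Lambda$ in place of $\Lambda_n$; although $\Lambda$ is only semi-local, it decomposes as a finite product of regular local rings isomorphic to $\mathcal O \llbracket x_1, \ldots, x_g \rrbracket$ (indexed by characters of the prime-to-$p$ torsion quotient of $(\mathcal O_F \otimes \mathbb Z_p)^\times$), and the commutative algebra fact can be applied on each factor. The main obstacle is verifying pseudo-coherence (equivalently, finite generation of cohomology) of $M^\bullet_\mathfrak m$ over $\Lambda_n$ --- i.e., the Hida finiteness statement for the ordinary part --- which can be bootstrapped from the finite $\mathcal O$-rank of $(e'_{\ord} H^g(Y_G(U_n), \mathcal O))_\mathfrak m$ provided by Corollary \ref{modpvanishing} via a Nakayama-type argument applied to the flat complex $M^\bullet_\mathfrak m$.
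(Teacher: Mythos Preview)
Your approach is correct and in fact proves more than the proposition asks for, but it takes a substantially different and more elaborate route than the paper. The paper simply localizes the Tor spectral sequence of Corollary~\ref{spectral1} at $\mathfrak m$; Caraiani--Tamiozzo (Corollary~\ref{modpvanishing}) applied at each finite level forces $e'_{\ord}H^j_{\Iw}(Y_G(U_\infty)_{\overline{\mathbb Q}},\mathcal O)_{\mathfrak m}=0$ for $j\neq g$, so every row $j\neq g$ on the $E_2$-page vanishes and the spectral sequence collapses immediately. No perfectness, minimal free resolutions, or Hida finiteness enter at this stage; in the paper those ingredients are deferred to the proof of Theorem~\ref{mainthm}. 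Your route front-loads that work: by establishing $M^\bullet_{\mathfrak m}\simeq L[-g]$ with $L$ free over $\Lambda_n$, you are simultaneously proving Theorem~\ref{mainthm}(a) and Corollary~\ref{lastcor} as part of the argument for Proposition~\ref{better}. The payoff is a clean derived-category formality statement; the cost is the perfectness verification you correctly flag as the main obstacle, which is not entirely trivial here since the terms of $M^\bullet$ are compact but not finitely generated over $\Lambda_n$ (so one genuinely needs the topological Nakayama lemma rather than the ordinary one, and the passage from ``finite cohomology of $M^\bullet_{\mathfrak m}\otimes^{\mathbb L}k$'' to ``$M^\bullet_{\mathfrak m}$ is perfect'' requires some care).
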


\begin{proof}
(a) Since localization is exact and commutes with the Tor functor, we deduce from Corollary \ref{spectral1} the following  spectral sequence: 
$$
F_2^{i, j}:=(E_2^{i,j})_\mathfrak m =\Tor^{\Lambda_n}_{-i}(e'_{\textrm{ord}}H^j_{\textrm{Iw}}(Y_G(U_\infty)_{\overline {\mathbb Q}}, \mathcal O)_\mathfrak m, \mathcal O[-\lambda]) \implies  H^{i+j}_{\textrm{\'et}} (Y_G(U_n), V_\lambda)_{\mathfrak m}. 
$$
By Proposition \ref{tor} and Corollary \ref{modpvanishing}, we deduce that $F_2^{i,j}=0$ unless $(i, j) \in \{ (0,g), \ldots (-g, g)\}$. It follows that the spectral sequence degenerates on the second page and thus the desired isomorphism follows directly from applying the filtration theorem. 
\vspace{2mm}
\\
(b) Using the spectral sequence in Corollary \ref{spectral2} and Remark \ref{oo}, this follows by using a similar argument as in part (a). 
\end{proof}

\begin{thm} \label{mainthm} With the notation as above, the following hold: 
\vspace{2mm}
\\
(a) For all $n \geq 1$, we have that $e'_{\textrm{ord}} H^g_{\Iw}(Y_G(U_\infty)_{\overline {\mathbb Q}}, \mathcal O)_{\mathfrak m}$ is free as a $\Lambda_n$-module.
\vspace{2mm}
\\
(b) For all $n \geq 1$, we have an isomorphism of $\mathcal O$-modules
$$
e'_{\textrm{ord}} H^g_{\Iw}(Y_G(U_\infty)_{\overline {\mathbb Q}}, \mathcal O)_{\mathfrak m} \otimes_{\Lambda_n} \mathcal O[-\lambda] \cong e'_{\textrm{ord}}H^g_{\textrm{\'et}}(Y_G(U_n)_{\overline{\mathbb Q}}, V_\lambda)_{\mathfrak m}. 
$$
that is compatible with the action of $G_{\mathbb Q, \Sigma}$ and $\mathbb T$. 
\vspace{2mm}
\\
(c) When $n=0$, we have a similar isomorphism
$$
e'_{\ord} H^g_{\Iw}(Y_G(U_\infty)_{\overline {\mathbb Q}}, \mathcal O)_{\mathfrak m} \otimes_{\Lambda} \mathcal O[-\lambda] \cong e'_{\ord}H^g_{\textrm{\'et}}(Y_G(U_0)_{\overline{\mathbb Q}}, V_\lambda)_{\mathfrak m}. 
$$
\end{thm}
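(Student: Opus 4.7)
The strategy is to derive both the freeness statement and the isomorphisms from Proposition \ref{better} combined with the vanishing of Corollary \ref{modpvanishing}. Set $M := e'_{\ord} H^g_{\Iw}(Y_G(U_\infty)_{\overline{\mathbb Q}}, \mathcal O)_{\mathfrak m}$, and recall that for $n \geq 1$, $\Lambda_n \cong \mathcal O\llbracket T_1, \ldots, T_g \rrbracket$ is a Noetherian local ring (the $T_i$ coming from topological generators of $\mathfrak S_{U_{n,p}} \cong \mathbb Z_p^g$, using that $p$ is unramified in $F$), while $\Lambda$ is a finite product of such rings indexed by characters of the prime-to-$p$ torsion subgroup of $(\mathcal O_F \otimes \mathbb Z_p)^{\times}$.

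Parts (b) and (c) are then immediate from Proposition \ref{better} by taking $i = 0$: since $\Tor_0^{\Lambda_n}(M, \mathcal O[-\lambda]) = M \otimes_{\Lambda_n} \mathcal O[-\lambda]$, the cited proposition directly yields the desired isomorphism of $\mathcal O$-modules, and the analogous argument gives (c). Compatibility with $G_{\mathbb Q, \Sigma}$ and $\mathbb T$ follows because the Tor spectral sequences of Corollaries \ref{spectral1} and \ref{spectral2} are equivariant for these actions, and equivariance is preserved under localisation at the Hecke-stable ideal $\mathfrak m$.

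For part (a), I would apply the local criterion of flatness. First, $M$ must be verified to be finitely generated over $\Lambda_n$: this follows from topological Nakayama, since $M$ is a compact $\Lambda_n$-module (it is the inverse limit of the finitely generated $\mathcal O$-modules $H^g_{\et}(Y_G(U_s)_{\overline{\mathbb Q}}, \mathcal O)_{\mathfrak m}$) and $M/\mathfrak m_{\Lambda_n} M$ is a subquotient of $H^g_{\et}(Y_G(U_n)_{\overline{\mathbb Q}}, \mathbb F_p)_{\mathfrak m}$, which is finite-dimensional. Next, I would apply Proposition \ref{better}(a) with the trivial self-dual weight $\lambda = 0$: this identifies $\Tor_j^{\Lambda_n}(M, \mathcal O) = e'_{\ord} H^{g-j}_{\et}(Y_G(U_n)_{\overline{\mathbb Q}}, \mathcal O)_{\mathfrak m}$, which vanishes for $j \geq 1$ by Corollary \ref{modpvanishing}. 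The $j = 0$ case shows $M/(T_1, \ldots, T_g) M \cong e'_{\ord} H^g_{\et}(Y_G(U_n)_{\overline{\mathbb Q}}, \mathcal O)_{\mathfrak m}$, which is $\mathcal O$-free (hence $\mathcal O$-flat) by the same corollary. Applying the local criterion of flatness to the ideal $I = (T_1, \ldots, T_g) \subset \Lambda_n$ (with $\Lambda_n / I = \mathcal O$) concludes that $M$ is flat over $\Lambda_n$; since $M$ is finitely generated over the Noetherian local ring $\Lambda_n$, flatness implies freeness, proving (a).

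The main obstacle will be the rigorous verification that $M$ is finitely generated over $\Lambda_n$, which requires a careful compactness and topological Nakayama argument to transfer the finiteness statement from the mod-$\mathfrak m_{\Lambda_n}$ fibre back to $M$. Everything else — including the Koszul-style computation of $\Tor$ packaged into the local flatness criterion, and the vanishing input from Caraiani--Tamiozzo — then slots in mechanically.
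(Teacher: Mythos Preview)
Your proposal is correct and follows essentially the same approach as the paper: parts (b) and (c) are read off from Proposition \ref{better} at $i=0$, and part (a) combines Proposition \ref{better} (with trivial $\lambda$), Corollary \ref{modpvanishing}, topological Nakayama, and the local criterion of flatness. The only cosmetic difference is that the paper applies the local criterion at the residue field $k$ (first deducing $\Tor_1^{\Lambda_n}(M,k)=0$ from $\Tor_1^{\Lambda_n}(M,\mathcal O)=0$ and $\pi$-torsion-freeness of $M\otimes_{\Lambda_n}\mathcal O$ via the long exact sequence for $0\to\mathcal O\xrightarrow{\pi}\mathcal O\to k\to 0$), whereas you invoke the relative form of the criterion over the flat quotient $\Lambda_n/(T_1,\ldots,T_g)=\mathcal O$ directly; these are equivalent formulations.
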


\begin{proof}
We first deduce parts (b) and (c) from the results above. 
\vspace{2mm}
\\
(b) This follows by setting $i=0$ in Proposition \ref{better} (a). 
\vspace{2mm}
\\
(c) 
This follows by setting $i=0$ in Proposition \ref{better} (b). 
\vspace{2mm}
\\
We now prove part (a). 
\vspace{2mm}
\\
(a) Let $\pi$ and $k$ denote the uniformizer and residue field of $\mathcal O$ respectively. 
Consider the short-exact sequence
$$
0 \rightarrow \mathcal O \xhookrightarrow{\cdot \pi} \mathcal O \twoheadrightarrow  k  \rightarrow 0. $$  This gives rise to a long-exact sequence of Tor groups: 
\begin{align*}
& \cdots \rightarrow \Tor^{\Lambda_n}_{1}(e'_{\textrm{ord}} H^g_{\textrm{Iw}}(Y_G(U_\infty)_{\overline {\mathbb Q}}, \mathcal O)_{\mathfrak m}, \mathcal O) \xrightarrow{\cdot \pi}  \Tor^{\Lambda_n}_{1}(e'_{\textrm{ord}} H^g_{\textrm{Iw}}(Y_G(U_\infty)_{\overline {\mathbb Q}}, \mathcal O)_{\mathfrak m}, \mathcal O) \rightarrow \\
& \Tor^{\Lambda}_{1}(e'_{\textrm{ord}}H^g_{\textrm{\'et}}(Y_G(U_\infty)_{\overline {\mathbb Q}}, \mathcal O)_\mathfrak m, k ) \rightarrow e'_{\textrm{ord}} H^g_{\textrm{Iw}}(Y_G(U_\infty)_{\overline {\mathbb Q}}, \mathcal O)_{\mathfrak m} \otimes_{\Lambda_n} \mathcal O \xrightarrow{\cdot \pi} 
 H^g_{\textrm{Iw}}(Y_G(U_\infty)_{\overline {\mathbb Q}}, \mathcal O)_{\mathfrak m} \otimes_{\Lambda_n} \mathcal O  \rightarrow \cdots  
\end{align*} 
By setting $i=0$ and $\lambda$ to be the trivial character in Proposition \ref{better}, and then applying Corollary \ref{modpvanishing}, we get that 
$$
 \Tor^{\Lambda_n}_{1}(e'_{\textrm{ord}}H^g_{\textrm{Iw}}(Y_G(U_\infty)_{\overline{\mathbb Q}}, \mathcal O_v)_\mathfrak m, \mathcal O)=0. 
$$
On the other hand, Corollary \ref{modpvanishing} implies that $e'_{\textrm{ord}} H^g_{\textrm{Iw}}(Y_G(U_\infty)_{\overline {\mathbb Q}}, \mathcal O)_{\mathfrak m} \otimes_{\Lambda_n} \mathcal O$
 has no $\pi$-torsion, so the long exact sequence gives us that $\Tor^{\Lambda_n}_{1}(e'_{\textrm{ord}}H^g_{\textrm{\'et}}(Y_G(U_\infty)_{\overline {\mathbb Q}}, \mathcal O)_\mathfrak m, k )=0$. Finally, we note that part (b) and the topological Nakayama's lemma (see \cite[pp.226]{PB97}) imply that $e'_{\textrm{ord}} H^g_{\textrm{Iw}}(Y_G(U_\infty)_{\overline {\mathbb Q}}, \mathcal O)_{\mathfrak m}$ is finitely generated as a $\Lambda_n$-module; 
 we can now use the local criterion of flatness to conclude that $e'_{\textrm{ord}} H^g_{\textrm{Iw}}(Y_G(U_\infty)_{\overline {\mathbb Q}}, \mathcal O)_{\mathfrak m} $ is flat as a $\Lambda_n$-module. Since a finitely generated flat module over a noetherian local ring is free, we have that $e'_{\textrm{ord}} H^g_{\textrm{Iw}}(Y_G(U_\infty)_{\overline {\mathbb Q}}, \mathcal O)_{\mathfrak m}$ is free as a $\Lambda_n$-module. 

\begin{cor} \label{lastcor}
For all $n \geq 0$, we have $H^i_{\textrm{\'et}}(Y_G(U_n)_{\overline{\mathbb Q}}, V_{\lambda} )_{\mathfrak m} = 0$ when $i \neq g$. 
\end{cor}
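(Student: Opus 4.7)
The plan is to extract the corollary directly from Proposition \ref{better} combined with the flatness of the Iwasawa cohomology module supplied by Theorem \ref{mainthm}(a). Informally, flatness of $e'_{\ord}H^g_{\Iw}(Y_G(U_\infty)_{\overline{\mathbb Q}}, \mathcal O)_\mathfrak m$ forces the higher Tor groups appearing in Proposition \ref{better} to vanish, collapsing the cohomology to a single degree.

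For $n \geq 1$, Proposition \ref{better}(a) supplies the identification
\[ H^{i+g}_{\textrm{\'et}}(Y_G(U_n)_{\overline{\mathbb Q}}, V_\lambda)_\mathfrak m \cong \Tor^{\Lambda_n}_{-i}\bigl(e'_{\ord}H^g_{\Iw}(Y_G(U_\infty)_{\overline{\mathbb Q}}, \mathcal O)_\mathfrak m,\, \mathcal O[-\lambda]\bigr). \]
The first argument is a free $\Lambda_n$-module by Theorem \ref{mainthm}(a), hence flat; consequently every higher $\Tor^{\Lambda_n}_k$ with $k \geq 1$ vanishes, while Tor in negative degree vanishes by convention. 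Thus for any $i \neq 0$ the right-hand side is zero, and setting $k = i+g$ gives $H^k_{\textrm{\'et}}(Y_G(U_n)_{\overline{\mathbb Q}}, V_\lambda)_\mathfrak m = 0$ for every $k \neq g$.

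For the Iwahori case $n=0$ I would run the same argument using Proposition \ref{better}(b) in place of (a), provided that $e'_{\ord}H^g_{\Iw}(Y_G(U_\infty)_{\overline{\mathbb Q}}, \mathcal O)_\mathfrak m$ is also flat as a $\Lambda$-module. This $n=0$ analogue of Theorem \ref{mainthm}(a) can be obtained by rerunning the short-exact-sequence argument of that proof with $\Lambda$ in place of $\Lambda_n$, using Proposition \ref{better}(b) with trivial $\lambda$ (together with Corollary \ref{modpvanishing}) to obtain the vanishing of $\Tor^{\Lambda}_1$ required by the local flatness criterion. No serious obstacle is expected at this stage: all the nontrivial input — the Tor spectral sequence of Section \ref{Section3}, the Caraiani--Tamiozzo vanishing (Theorem \ref{ct}), and the freeness of the ordinary Iwasawa cohomology — has already been established, and the corollary is essentially the bookkeeping observation that flatness collapses the spectral sequence of Corollary \ref{spectral1} onto its single surviving entry at bidegree $(0,g)$.
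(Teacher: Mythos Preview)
Your proof is correct and follows essentially the same approach as the paper: both invoke Proposition \ref{better} together with the freeness of $e'_{\ord}H^g_{\Iw}(Y_G(U_\infty)_{\overline{\mathbb Q}},\mathcal O)_{\mathfrak m}$ from Theorem \ref{mainthm}(a) to kill the higher Tor terms. You are in fact slightly more explicit than the paper about the $n=0$ case, correctly observing that one needs flatness over $\Lambda$ rather than $\Lambda_n$ and that this is obtained by rerunning the argument of Theorem \ref{mainthm}(a) with Proposition \ref{better}(b) in place of (a); the paper simply says ``the case $n=0$ follows similarly.''
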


\begin{proof}

When $n \geq 1$, we recall from Proposition \ref{better} (a) that $
\Tor^{\Lambda_n}_{-i}(e'_{\ord}H^g_{\Iw}(Y_G(U_\infty)_{\overline{\mathbb Q}}, \mathcal O )_\mathfrak m, \mathcal O[-\lambda])= H^{i+g}_{\textrm{\'et}} (Y_G(U_n)_{\overline{\mathbb Q}}, V_\lambda)_{\mathfrak m}. 
$
On the other hand, we know from Theorem \ref{mainthm} that $e'_{\textrm{ord}} H^g_{\Iw}(Y_G(U_\infty)_{\overline {\mathbb Q}}, \mathcal O)_{\mathfrak m}$ is free as a $\Lambda_n$-module.  Thus, the above Tor group vanishes when $i \neq 0$ and so we conclude as desired that $H^i_{\textrm{\'et}}(Y_G(U_n)_{\overline{\mathbb Q}}, V_{\lambda} )_{\mathfrak m} = 0$ when $i \neq g$.  The case $n=0$ follows similarly using Proposition \ref{better} (b). 
\end{proof}

\end{document}